\definecolor{darkred}{rgb}{0.5,0.2,0.2}
\theoremstyle{plain}
\newtheorem{theorem}{Theorem}[section]
\newtheorem{corollary}{Corollary}[section]
\newtheorem{lemma}{Lemma}[section]
\newtheorem{proposition}{Proposition}[section]
\theoremstyle{definition}
\newtheorem{example}{Example}[section]
\theoremstyle{remark}
\newtheorem{remark}{Remark}[section]
\def\F{{\mathcal F}}
\newcommand{\N}{\mathbb{N}}
\newcommand{\R}{\mathbb{R}}
\newcommand{\E}{\mathbb{E}}
\newcommand{\BSS}{\mathcal{BSS}}
\def\bi{\begin{itemize}}
\def\ei{\end{itemize}}
\numberwithin{equation}{section}
\newcounter{nameOfYourChoice}
\newif\ifi
\begin{document}

\title{Semiparametric estimation and inference on the fractal index of Gaussian and conditionally Gaussian time series data\footnote{The author wishes to thank Professor Asger Lunde and Dr. Mikko S. Pakkanen for insightful discussions relating to fractal processes. The research has been supported by CREATES (DNRF78), funded by the Danish National Research Foundation.}}

\author{Mikkel Bennedsen\thanks{
Department of Economics and Business Economics and CREATES, 
Aarhus University, 
Fuglesangs All\'e 4,
8210 Aarhus V, Denmark.
E-mail:\
\href{mailto:mbennedsen@econ.au.dk}{\nolinkurl{mbennedsen@econ.au.dk}} 
}}

\maketitle

\begin{abstract}
We study a well-known estimator of the fractal index of a stochastic process. Our framework is very general and encompasses many models of interest; we show how to extend the theory of the estimator to a large class of non-Gaussian processes. Particular focus is on clarity and ease of implementation of the estimator and the associated asymptotic results, making it easy for practitioners to apply the methods. We additionally show how measurement noise in the observations will bias the estimator, potentially resulting in the practitioner erroneously finding evidence of fractal characteristics in a time series. We propose a new estimator which is robust to such noise and construct a formal hypothesis test for the presence of noise in the observations. Finally, the methods are illustrated on two empirical data sets; one of turbulent velocity flows and one of financial prices.
\end{abstract}

\vspace*{1.5em}

\noindent {\textbf{Keywords}}: Fractal index; roughness; estimation; inference; fractional Brownian motion; stochastic volatility.

\vspace*{1em}

\noindent {\textbf{JEL Classification}}: C12, C22, C51, G12

\vspace*{1em}

\noindent {\textbf{MSC 2010 Classification}}: 60G10, 60G15, 60G17, 60G22, 62M07, 62M09, 65C05


\section{Introduction}
Fractal-like models are used in a wide array of applications such as in the characterization of surface smoothness/roughness \citep{CH94}, in the study of turbulence \citep{CHPP13}, and many others \citep[e.g.,][]{burrough81,mandelbrot82,falconer90}. Most recently, these models have attracted attention in mathematical finance, as models of stochastic volatility \citep[e.g.,][]{GJR14,BFG15,BLP16,BLP15,JMM17}. In such applications, it is imperative to be able to estimate and conduct inference on the key parameter in these models, the \emph{fractal index}. Many estimators of this parameter exist \citep[see][for a survey]{GSP12}; however, the underlying assumptions behind the various estimators, as well as their asymptotic properties, are often different and rarely stated in a clear and concise manner. These facts can make analysis difficult for the practitioner, as well as for the researcher.

This paper aims at making empirical analysis in applications, such as the ones mentioned above, easier. We clearly lay out a large and coherent framework -- including the valid underlying assumptions -- for analysing time series data which are potentially fractal-like. We focus on a specific estimator, which is arguably the most widely used in practice and which in our experience is the most accurate. Further, the estimator is easy to implement -- it relies on a simple OLS regression -- and its asymptotic properties are easy to apply. Our hope is that this will provide a transparent guide to analyzing fractal data using sound statistical methods.


The main contribution of the  paper is to lay out the theory of the estimator and provide the theoretical underpinnings of it, stating the results in a manner so that application of the results becomes straight forward. For this, we rely heavily on earlier theoretical work on the increments of fractal processes, most notably \cite{BNCP09} and \cite{BNCP11}. We further investigate the estimator numerically to gauge it's properties when applied to data, leading to a number of practical recommendations for implementation. Most importantly, we advocate a different choice of bandwidth parameter for the estimator, than what is generally accepted practice in the literature, cf. Section \ref{sec:band}.


In their survey of the asymptotic theory of various estimators of the fractal index, \cite{GSP12} section 3.1., report that ``a general non-Gaussian theory remains lacking". The second contribution of this paper is to extend the estimation theory beyond the Gaussian paradigm. We accomplish this by \emph{volatility modulation} which turns out to be a convenient way of extending the theory to a large class of non-Gaussian processes. As will be seen, this results in \emph{conditionally} Gaussian processes for which the fractal theory continues to hold. Again we clearly lay out the relevant assumptions and focus on the interpretation of the results and implementation of the methods.

The final contribution of the paper is an in-depth study of the case where the data are contaminated by noise, such as measurement noise. We prove that noise will bias estimates of the fractal index downwards, thereby making noise-contaminated data look more rough than the underlying process actually is. We go on to propose a novel way to construct an estimator which is robust to noise in the observations. The new estimator also relies on an OLS regression and is just as easy to implement as the standard (non-robust) estimator studied in the first part of the paper. We present the asymptotic theory concerning the robust estimator and propose a hypothesis test, which can be used to formally test for the presence of noise in the observations. 

The rest of the paper is structured as follows. Section \ref{sec:setup} presents the mathematical setup and assumptions and gives some examples of the kind of processes we have in mind. The section then goes on to consider some extensions to the basic setup, most notably the extension to non-Gaussian processes. Section \ref{sec:bootstrap} presents the semiparametric estimator of the fractal index and it's asymptotic properties. Then, in Section \ref{sec:assNoise}, we consider the case where the observations have been contaminated by noise and present asymptotic theory for a new estimator in this case; Section \ref{sec:testNoise} presents a formal test for the presence of noise. Section \ref{sec:sims} contains small simulation studies, illustrating the finite sample properties of the asymptotic results presented in the paper. Finally, Section \ref{sec:emp} contains two illustrations of the methods: the first using measurements of the longitudinal component of a turbulent velocity field, and the second using a time series of financial prices. Section \ref{sec:concl} concludes and gives some directions for future study. Proofs of technical results and some mathematical derivations are given in an appendix.

\section{Setup}\label{sec:setup}
Let $(\Omega,\F, \mathbb{P})$ be a probability space satisfying the usual assumptions and supporting $X$, a one-dimensional, zero-mean, stochastic process with stationary increments. Define the \emph{$p$'th order variogram} of $X$:
\begin{align*}
\gamma_p(h;X) := \E[|X_{t+h} - X_t|^p], \quad h \in \R.
\end{align*}
As we intend to make use of the theory developed in \cite{BNCP09,BNCP11} we adopt the assumptions of those papers. The assumptions are standard in the literature on fractal processes and are as follows.

\begin{enumerate}[label=(A\arabic*),ref=A\arabic*,leftmargin=3em]
\item \label{ass:1}
For some $\alpha \in \left(-\frac{1}{2},\frac{1}{2}\right)$,
\begin{align}\label{eq:scaling1}
\gamma_2(x;X) = x^{2\alpha+1}L(x),	\quad x \in (0,\infty),
\end{align}
where $L : (0,\infty) \to [0,\infty)$ is continuously differentiable and bounded away from zero in a neighborhood of $x=0$. The function $L$ is assumed to be slowly varying at zero, in the sense that $\lim_{x\rightarrow 0} \frac{L(tx)}{L(x)} = 1$ for all $t>0$. 
\item \label{ass:2}
	$\frac{d^2}{dx^2} \gamma_2(x;X) = x^{2\alpha-1}L_2(x)$ for some slowly varying (at zero) function $L_2$, which is continuous on $(0,\infty)$.\footnote{Following \cite{BHLP16} this assumption is replaced by the following in the case $\alpha = 0$: (\ref{ass:2}') $\frac{d^2}{dx^2} \gamma_2(x;X) = f(x) L_2(x)$, where $L_2$ is as in (\ref{ass:2}), and the function $f$ is such that $|f(x)|\leq C x^{-\beta}$ for some constants $C>0$ and $\beta>1/2$.}
\item \label{ass:3} \setcounter{nameOfYourChoice}{3}
	There exists $b\in(0,1)$ with
\begin{align*}
\limsup_{x\rightarrow 0} \sup_{y \in [x,x^b]} \left| \frac{L_2(y)}{L(x)} \right| < \infty.
\end{align*}
\item \label{ass:99}  \setcounter{nameOfYourChoice}{4}
	There exists a constant $C>0$ such that the derivative $L'$ of $L$ satisfies
\begin{align*}
|L'(x)| \leq C\left(1+x^{-\delta}\right), \quad x \in (0,1],
\end{align*}
for some $\delta \in (0,1/2)$.
\end{enumerate}

\begin{remark}
The technical assumption (\ref{ass:3}) can be replaced by the weaker assumption
\begin{align*}
\left|  \frac{\gamma_2((j+1)/n;X) -2\gamma_2(j/n;X)+ \gamma_2((j-1)/n;X)}{2\gamma_2(1/n;X)} \right| \leq r(j), \quad \frac{1}{n} \sum_{j=1}^n r(j)^2 \rightarrow 0, \quad n \rightarrow \infty,
\end{align*}
for some sequence $r(j)$.
\end{remark}

\begin{remark}
The technical assumption (\ref{ass:99}) is only needed for the asymptotic normality of the estimator of $\alpha$ and not for consistency.
\end{remark}

The parameter $\alpha \in (-1/2,1/2)$ is termed the fractal index because it, under mild assumptions, is related to the \emph{fractal dimension} $D = \frac{3}{2} - \alpha$ of the sample paths of the process $X$ \citep{falconer90,GSP12}. It is also refered to as the \emph{roughness index} of $X$, since the value of $\alpha$ reflects itself in the \emph{pathwise properties} of $X$, as the following result formalizes.
\begin{proposition}\label{lem:holder}
Let $X$ be a Gaussian process with stationary increments satisfying (\ref{ass:1}) with fractal index $\alpha \in (-1/2,1/2)$.  Then there exists a modification of $X$ which has locally H\"older continuous trajectories of order $\phi$ for all $\phi \in \left(0, \alpha+\frac{1}{2}\right)$.
\end{proposition}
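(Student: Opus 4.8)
The plan is to apply the Kolmogorov--Chentsov continuity theorem, exploiting the fact that for a zero-mean Gaussian process all moments of the increments are governed by the second moment, i.e.\ by the variogram $\gamma_2(\cdot;X)$. First I would record the basic moment structure: since $X$ is zero-mean Gaussian with stationary increments, the increment $X_{t+h}-X_t$ is a centered Gaussian random variable whose variance equals $\gamma_2(h;X)$ and depends only on $h$. Hence, for every $p>0$ there is a constant $c_p$ (the absolute $p$-th moment of a standard normal) such that
\begin{align*}
\E[|X_{t+h}-X_t|^p] = c_p\,\gamma_2(|h|;X)^{p/2} = c_p\,|h|^{p(\alpha+1/2)}L(|h|)^{p/2},
\end{align*}
where the last equality uses (\ref{ass:1}).

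The crucial step is then to absorb the slowly varying factor $L(|h|)^{p/2}$. Because $L$ is slowly varying at zero, for any $\eta>0$ one has $x^{\eta}L(x)^{p/2}\to 0$ as $x\to 0$ (a Potter-type bound), so there is a constant $C=C_{\eta,p}$ with $L(|h|)^{p/2}\le C|h|^{-\eta}$ for all sufficiently small $|h|$. Combining,
\begin{align*}
\E[|X_{t+h}-X_t|^p] \le C\,|h|^{\,p(\alpha+1/2)-\eta}, \qquad |h| \text{ small}.
\end{align*}
Writing the exponent as $1+\beta$ with $\beta = p(\alpha+1/2)-\eta-1$, the Kolmogorov--Chentsov theorem yields a modification of $X$ whose trajectories are locally H\"older continuous of every order $\phi' < \beta/p = (\alpha+1/2)-(\eta+1)/p$, provided $\beta>0$, which holds once $p$ is large.

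Finally, to reach an arbitrary target $\phi\in(0,\alpha+1/2)$, I would note that $(\alpha+1/2)-(\eta+1)/p$ can be made larger than $\phi$: since $(\alpha+1/2)-\phi>0$, it suffices to fix any $\eta>0$ and then choose $p$ large enough that $(\eta+1)/p < (\alpha+1/2)-\phi$; the resulting modification is then locally H\"older of order $\phi$. I expect the only genuine subtlety to be the control of $L$: one must use that a slowly varying function is dominated near zero by every negative power of $x$, and this is precisely what lets the attainable Kolmogorov exponent approach the sharp value $\alpha+1/2$ as $p\to\infty$. A minor technical point is that the moment bound is only needed for small increments, which is exactly the regime where the slowly varying asymptotics apply, so restricting to a fixed compact time interval causes no difficulty for the \emph{local} H\"older conclusion.
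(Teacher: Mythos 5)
Your proof is correct and follows essentially the same route as the paper's: Gaussianity reduces all $p$-th moments of increments to powers of $\gamma_2$, a Potter-type bound absorbs the slowly varying factor $L$ at the cost of an arbitrarily small exponent loss, and the Kolmogorov--Chentsov criterion with moment order $p\to\infty$ yields local H\"older continuity of every order $\phi<\alpha+\tfrac{1}{2}$. The only cosmetic differences are that the paper works with even moments $2n$ and handles the large-increment regime on a compact set explicitly via continuity of $L$ on $(0,\infty)$, a point you correctly flag as a minor technicality.
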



Proposition \ref{lem:holder} shows that $\alpha$ controls the degree of (H\"older) continuity of $X$. In particular, negative values of $\alpha$ corresponds to $X$ having \emph{rough} paths, while positive values of $\alpha$ corresponds to  \emph{smooth} paths. It is well known that the Brownian motion has $\alpha = 0$. In Table \ref{tab:ex} we give some parametric examples of the kind of processes we have in mind and comment on how they fit into the setup of the present paper; the examples are taken from Table 1 in \cite{GSP12}.

\begin{table}
\footnotesize
\caption{Parametric examples of Gaussian fractal processes}
\begin{center}
\scriptsize
\begin{tabularx}{.99\textwidth}{@{\extracolsep{\stretch{1}}}llll@{}} 
\toprule
Class & Autocorrelation function & Slowly varying function & Parameters\\ 
\midrule
fBm & $-$ & $L(x) = \beta$ & $\alpha \in (-1/2,1/2)$ \\
Mat\'ern & $\rho(x) = \frac{2^{-\alpha+ 1/2}}{\Gamma(\alpha+1/2)}|\beta x|^{\alpha+1/2}K_{\alpha+1/2}(|\beta x|)$ & $L(x) = 2 x^{-2\alpha-1}(1-\rho(x))$ & $\alpha \in (-1/2,1/2)^a$ \\
Powered exp. & $\rho(x) =\exp \left(-|\beta x|^{2\alpha+1} \right)$ & $L(x) =2 x^{-2\alpha-1}(1-\rho(x))$ & $\alpha \in (-1/2,1/2)^b$ \\
Cauchy & $\rho(x) = \left( 1+|\beta x|^{2\alpha+1} \right)^{-\frac{\tau}{2\alpha + 1}}$ & $L(x) =2 x^{-2\alpha-1}(1-\rho(x))$ & $\alpha \in (-1/2,1/2)^b$, $\tau>0$ \\
Dagum & $\rho(x) = 1 - \left(  \frac{|\beta x|^{2\tau+1}}{1+|\beta x|^{2\tau+1}}\right)^{\frac{2\alpha +1}{2\tau+1}}$ & $L(x) =2 x^{-2\alpha-1}(1-\rho(x))$ & $\tau\in (-1/2,1/2)^c$, $\alpha \in (-1/2,\tau)$ \\
\bottomrule 
\end{tabularx}
\end{center}
{\footnotesize Parametric examples of Gaussian fractal processes. ``fBm" is the fractional Brownian motion; ``Powered exp." is the powered exponential process. $\beta>0$ is a scale parameter and $\alpha$ is the fractal index. The processes fulfill assumptions (\ref{ass:1})--(\ref{ass:3}) for the parameter ranges given in the rightmost column; a letter superscript denotes whether the parameter ranges are different under (\ref{ass:99}). $a$: (\ref{ass:99}) valid for $\alpha \in (-1/2,1/4)$. $b$: (\ref{ass:99}) valid for $\alpha \in (-1/4,1/2)$. $c$: (\ref{ass:99}) valid for $\tau \in [-1/4,1/2)$.} 
\label{tab:ex}
\end{table}

To get an intuitive understanding of how the trajectories of the fractal processes look, and in particular how the value of $\alpha$ reflects itself in the roughness of the paths, Figure \ref{fig:paths} plots three simulated trajectories of the Mat\'ern process. It is evident how negative values of $\alpha$ correspond to very rough paths, while the paths become smoother as $\alpha$ increases.

\begin{figure}[!t] 
\centering 
\includegraphics[width=\textwidth]{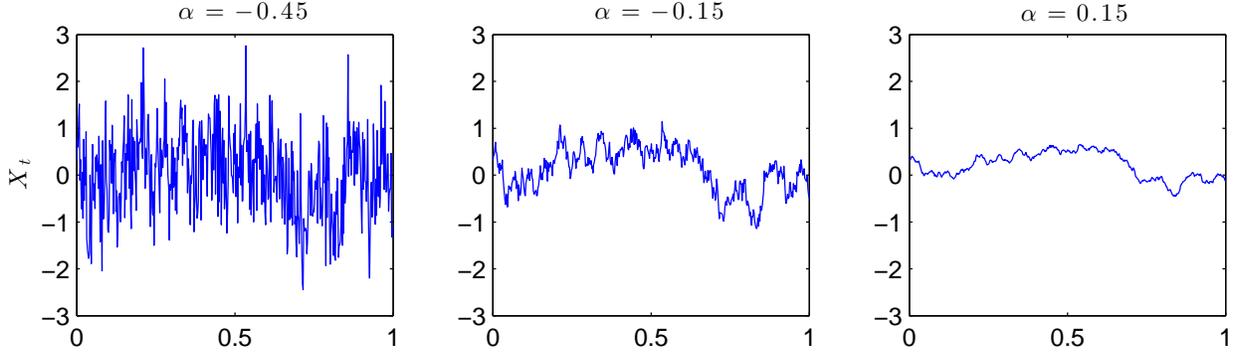}
\caption{Simulations of the unit-variance Mat\'ern process, cf. Table \ref{tab:ex}, with $\beta = 1$, $\alpha$ as indicated above the plots, and $n = 500$ observations on the unit interval. The same random numbers were used for all three instances.}
\label{fig:paths}
\end{figure}

The processes in Table \ref{tab:ex} are all Gaussian. However, in many applications it is preferable to have rough processes which are both fractal and non-Gaussian \citep[][section 3.1.]{GSP12}. In the following section we suggest an extension to the above setup that explicitly results in non-Gaussian processes with fractal properties, by considering processes which are \emph{volatility modulated}.

\subsection{Extension to stochastic volatility processes}\label{sec:SV}
A flexible way to introduce non-Gaussianity of processes for which the theory of the fractal index continues to hold, is through volatility modulation. Following \cite{BNCP09}, consider processes of the form
\begin{align}\label{eq:Xsv}
X_t = X_0 +   \int_{0}^t \sigma_s dG_s, \quad t \geq 0,
\end{align}
where $X_0 \in \R$, $\sigma = (\sigma_t)_{t\geq 0}$ is a stochastic volatility process, and $G =  (G_t)_{t\geq 0}$ is a zero-mean Gaussian process with stationary increments satisfying (\ref{ass:1})--(\ref{ass:99}), e.g. one of the processes from Table \ref{tab:ex}. The modulation of the increments of $G$ by the stochastic volatility process is a convenient way of introducing non-Gaussianity. To see this, note that the marginal distribution of $X_t$, conditional on the past of the stochastic volatility process and the starting value $X_0$, is
\begin{align*}
X_t |(\sigma_s, s \in [0,t]; X_0) \sim N\left(X_0, \int_0^{t} \sigma_{x}^2 dx \right), \quad t \geq 0.
\end{align*}
In other words, the marginal distribution of $X_t$ is a \emph{normal mean-variance mixture} distribution, where the distribution of the stochastic process $\sigma$ and initial value $X_0$ determine the mixture.

For the integral in \eqref{eq:Xsv} to be well defined (in a pathwise Riemann-Stieltjes sense), we require that $\sigma$  has finite $q$-variation for some $q < \frac{1}{1/2-\alpha}$. Intuitively, this means that the ``more rough" $G$ is, the ``less rough" $\sigma$ can be. Under these conditions on $G$ and $\sigma$, the process $X$ in \eqref{eq:Xsv} will inherit the fractal properties of the driving process $G$, as shown in \cite{BNCP09}.
 
For the central limit theorems developed below to hold, we further require another assumption on $\sigma$.

\begin{enumerate}[label=(SV),ref=SV]
\item \label{ass:4} For any $q>0$, it holds that
\begin{align*}
\E[ |\sigma_t - \sigma_s|^q] \leq C_q |t-s|^{\xi q}, \quad t,s\in \R,
\end{align*}
for some $\xi>0$ and $C_q>0$.
\end{enumerate}

As pointed out in \cite{BLP16}, the requirement that $\sigma$  has finite $q$-variation for a $q < \frac{1}{1/2-\alpha}$ can be quite restrictive. For instance, if $\alpha < 0$ (i.e., $G$ is rough) then $\sigma$ can not be driven by a standard Brownian motion. A very convenient process, which does not have these restrictions and which is very tractable, is the \emph{Brownian semistationary process}, which we consider next. 

\subsubsection{The Brownian semistationary process}\label{sec:BSS}
Consider $X$, the (volatility modulated) Brownian semistationary ($\BSS$) process  \citep{BNSc07,BNSc09}, defined as
\begin{align}\label{eq:Xbss}
X_t = \int_{-\infty}^t g(t-s) \sigma_s dW_s, \quad t\geq 0,
\end{align}
where $W$ is a Brownian motion on $\R$, $\sigma = (\sigma_t)_{t\in \R}$ a stationary process, and $g$ a Borel measurable function such that $\int_{-\infty}^t g(t-s)^2\sigma_s^2 ds < \infty$ a.s. See, e.g., \cite{BLP16} for further details of the $\BSS$ process. The $\BSS$ process is also a normal mean-variance mixture:
\begin{align*}
X_t |(\sigma_s, s\leq t) \sim N\left(0, \int_0^{\infty} g(x)^2 \sigma_{t-x}^2 dx \right), \quad t \geq 0.
\end{align*}

It is interesting to note that \cite{BNBV13} show that for a particular choice of kernel function $g$ and stochastic volatility process $\sigma$, $X$ will have a marginal distribution of the ubiquitous Normal Inverse Gaussian type.

We need to impose some technical assumptions on the kernel function $g$. They are as follows.

\begin{enumerate}[label=(BSS),ref=BSS]
\item \label{ass:5} It holds that
	\begin{enumerate}
		\item \label{ass:BSSa}
			$g(x) = x^{\alpha}L_g(x)$, where $L_g$ is slowly varying at zero.
		\item \label{ass:BSSb}
			$g'(x) = x^{\alpha-1}L_{g'}(x)$, where $L_{g'}$ is slowly varying at zero, and, for any $\epsilon>0$, we have $g'\in L^2((\epsilon,\infty))$. Also, for some $a>0$, $|g'|$ is non-increasing on the interval $(a,\infty)$.\footnote{Again following \cite{BHLP16}, in the case $\alpha = 0$ an alternative assumption is adopted: (\ref{ass:BSSb}') $g'(x) = L_{g'}(x)$, where $L_{g'}$ is as in (\ref{ass:BSSb}).}
		\item
			For any $t>0$,
			\begin{align*}
				F_t := \int_1^{\infty}|g'(x)|^2 \sigma_{t-x}^2 dx < \infty.
			\end{align*}
	\end{enumerate}
\end{enumerate}

The kernel function gives the $\BSS$ framework great flexibility. A particularly useful kernel function which has been applied in a number of studies, e.g. \cite{BNBV13} and \cite{bennedsen15}, is the so-called \emph{gamma kernel}.
\begin{example}[$\Gamma$-$\BSS$ process]\label{ex:gamma}
Let $g$ be the gamma kernel, i.e. $g(x) = x^{\alpha} e^{-\lambda x}$ for $\alpha \in (-1/2,1/2)$ and $\lambda>0$. The resulting process 
\begin{align*}
X_{t}=\int_{-\infty }^{t}(t-s)^{\alpha} e^{-\lambda (t-s)}\sigma_s dW_{s}, \quad t \geq 0,
\end{align*}
is called the (volatility modulated) $\Gamma$-$\BSS$ process. It is not hard to show that this process fulfills assumptions (\ref{ass:1})--(\ref{ass:99}) and (\ref{ass:5}), see Example 2.3. in \cite{BLP15}.
\end{example}



\begin{remark}
In \cite{BLP15} it was shown that $\BSS$ processes satisfying (\ref{ass:1})--(\ref{ass:3}), (\ref{ass:4}), and (\ref{ass:5}) will have the same fractal and continuity properties as their Gaussian counterparts: for such a $\BSS$ process Proposition \ref{lem:holder} continues to hold. In other words, $X$ will have a modification with H\"older continuous trajectories of order $\phi$ for all $\phi \in (0,\alpha + 1/2)$. 
\end{remark}

\subsection{Extension to processes with non-stationary increments}\label{sec:nonst}
When the increments of $X$ are non-stationary  an approach similar to the one in \cite{BLP15} can be adopted as follows. Define the time-dependent variogram 
\begin{align*}
\gamma_2(h,t) := \E[|X_{t+h}-X_t|^2], \quad h, t \in \R,
\end{align*}
and, analogously to \eqref{eq:scaling1}, assume that
\begin{align}\label{eq:scalingNS}
\gamma_2(h,t) = C_{2,t}|h|^{2\alpha+1}L(h), \quad t>0, \quad h\in \R,
\end{align}
where again $C_{2,t} >0$, $\alpha \in \left(-\frac{1}{2},\frac{1}{2}\right)$, and $L$ is a slowly varying function at zero. The methods considered in this paper applies  -- mutatis mutandis -- also to such processes. An example is the truncated Brownian semistationary process.

\begin{example}[Truncated $\BSS$ process, \cite{BLP15}]
Let
\begin{align*}
X_t = X_0 +  \int_{0}^t g(t-s)\sigma_s dW_s, \quad t \geq 0,
\end{align*}
where $X_0 \in \R$, $W$ is a Brownian motion, and $\sigma$ a stochastic volatility process. \cite{BLP15} call such a process a \emph{truncated} $\BSS$ ($\mathcal{TBSS}$) process. When	 $X$ satisfies (\ref{ass:1})--(\ref{ass:3}) and (\ref{ass:5}), \cite{BLP15} show that $\alpha$ is indeed the fractal index of $X$, in the sense of $\gamma_2(h,t)$ satisfying \eqref{eq:scalingNS}. We note that processes similar to the $\mathcal{TBSS}$ process (with $\sigma_t = 1$ for all $t$) have recently been proposed as models of stochastic log-volatility of financial assets, e.g., \cite{GJR14,BFG15}.
\end{example}

\subsection{Summary of assumptions}
Above we introduced a number of processes, differing in important ways, most notably through their distributional properties. In spite of these differences, the results presented in this paper will apply equally to all of them. To ease notation, we briefly summarize the assumptions here. 

The first set of assumptions is required for consistency of the estimator of the fractal index $\alpha$.

\begin{enumerate}[label=(LLN),ref=LLN]
\item \label{ass:LLN} Suppose that one of the following holds:
	\begin{enumerate}
		\item \label{ass:lln1}
			$X$ is Gaussian satisfying (\ref{ass:1})--(\ref{ass:3}) for an $\alpha \in (-1/2,1/2)$.
		\item \label{ass:lln2}
			$X$ is defined by \eqref{eq:Xsv} where $G$ satifies (\ref{ass:1})--(\ref{ass:3}) for an $\alpha \in (-1/2,1/2)$ and $\sigma$ has finite $q$-variation for all $q < \frac{1}{1/2-\alpha}$.
		\item \label{ass:lln3}
			$X$ is a $\BSS$ process, defined by \eqref{eq:Xbss}, satisfying (\ref{ass:1})--(\ref{ass:3}) for an $\alpha \in (-1/2,1/2)$ and with kernel function $g$ satisfying (\ref{ass:5}). 
	\end{enumerate}
\end{enumerate}

The second set of assumptions is required for asymptotic normality of the estimator of the fractal index $\alpha$.

\begin{enumerate}[label=(CLT),ref=CLT]
\item \label{ass:CLT} Suppose that one of the following holds:
	\begin{enumerate}
		\item \label{ass:clt1}
			$X$ is Gaussian satisfying (\ref{ass:LLN}) for an $\alpha \in (-1/2,1/4)$, as well as (\ref{ass:99}).
		\item \label{ass:clt2}
			$X$ is defined by \eqref{eq:Xsv} satisfying (\ref{ass:LLN}) for an $\alpha \in (-1/2,1/4)$, as well as (\ref{ass:99}). The process $\sigma$ additionally fulfills (\ref{ass:4}).
		\item \label{ass:clt3}
			$X$ is a $\BSS$ process, defined by \eqref{eq:Xbss},  satisfying (\ref{ass:LLN}) for an $\alpha \in (-1/2,1/4)$, as well as (\ref{ass:99}). The process $\sigma$ additionally fulfills (\ref{ass:4}).
	\end{enumerate}
\end{enumerate}

\begin{remark}\label{rem:rates}
As seen from the assumptions, the central limit theorems will not be applicable for $\alpha \geq 1/4$. In fact, a central limit theorem do hold in this case, but with a different convergence rate and limiting distribution from what we derive below. When $\alpha = 1/4$, the convergence rate is $\sqrt{\frac{n}{\log n}}$ and the limiting distribution is zero-mean Gaussian with an asymptotic variance different from when $\alpha <1/4$. When $\alpha > 1/4$ the convergence rate is $n^{1-2\alpha}$ and the limiting distribution is of the Rosenblatt type, see \cite{taqqu79}.  If one is interested in the range $\alpha \in [1/4,1/2)$ and desire asymptotic normality results similar to what we have below, we recommend using gaps between the observations as in \cite{CHPP13} Remark 4.4; the downside of this approach is that one is forced to throw away observations. Given the results presented below, filling in the details of this approach is straight forward, albeit notationally cumbersome. Since the case of very smooth processes, i.e. $\alpha \geq 1/4$, seems of limited practical value, we do not pursue this further here.
\end{remark}

\section{Semiparametric estimation of, and inference on, the fractal index}\label{sec:bootstrap}
Consider $n$ equidistant observations $X_{1/n}, X_{2/n}, \ldots, X_{1}$ of the stochastic process $X$, observed over a fixed time interval, which we without loss of generality take to be the unit interval, so that the time between observations is $\frac{1}{n}$. As $n \rightarrow \infty$, this gives rise to the so-called \emph{in-fill asymptotics}. In what follows, suppose that the process $X$ satisfies the assumptions (\ref{ass:1})--(\ref{ass:3}).

When $X$ is Gaussian, it holds, by standard properties on the (absolute) moments of the Gaussian distribution and \eqref{eq:scaling1}, that
\begin{align}\label{eq:gam_rel}
\gamma_p(h;X) = C_p |h|^{(2\alpha+1)p/2} L_p(h), \quad h \in \R,
\end{align}
where $p>0$, the function $L_p(h) := L(h)^{p/2}$ is slowly varying at zero, and $C_p>0$ is a constant. This motivates the regression
\begin{align}\label{eq:reg_a}
\log \hat{\gamma}_p(k/n;X) = c_p +a \log|k/n| + U_{k,n} + \epsilon_{k,n}, \quad k = 1, 2, \ldots, m,
\end{align}
where $m \in \N$ is a bandwidth parameter, 
\begin{align*}
c_p = \log C_p, \quad a = \frac{(2\alpha+1)p}{2}, \quad U_{k,n} = \log \left( \frac{\hat{\gamma}_p(k/n;X)}{\gamma_p(k/n;X)} \right), \quad \textnormal{ and} \quad \epsilon_{k,n} =\log L_p(k/n).
\end{align*}
The variogram $\gamma_p$ is estimated straightforwardly as 
\begin{align}\label{eq:gam_hat}
\hat{\gamma}_p(k/n;X) := \frac{1}{n-k} \sum_{i=1}^{n-k} |X_{\frac{i+k}{n}} - X_{\frac{i}{n}}|^p, \quad k\geq 1.
\end{align} 
The OLS estimator of the parameter $a$ is naturally
\begin{align*}
\hat{a}_{OLS} = \frac{1}{x_m^T x_m} x_m^T \log \hat{\gamma}_p^m,
\end{align*} 
with ``T" denoting the transpose of a vector and $x_m$ being the $m\times 1$ vector
\begin{align*}
x_m := \left(\log 1 - \overline{\log m}, \log 2 - \overline{\log m}, \ldots , \log m - \overline{\log m}\right)^T, \qquad \overline{\log m} := \frac{1}{m}\sum_{k=1}^m \log k,
\end{align*}
while
\begin{align*}
\log \hat{\gamma}_p^m := \left(\log \hat{\gamma}_p(1/n;X),\log \hat{\gamma}_p(2/n;X), \ldots , \log \hat{\gamma}_p(m/n;X)\right)^T.
\end{align*}
Given an estimate $\hat{a}_{OLS}$ of $a$, our estimate of the fractal index is 
\begin{align}\label{eq:olsA}
\hat{\alpha} := \frac{\hat{a}_{OLS}}{p}-\frac{1}{2}.
\end{align}
This estimator is well known and much used in the literature, e.g. \cite{GS04,GJR14,BLP16}. The following proposition shows the consistency of the OLS estimator of $\alpha$.

\begin{proposition}\label{prop:cons}
Suppose (\ref{ass:LLN}) holds. Fix $p>0$, $m \in \N$, and let $\hat{\alpha} = \hat{\alpha}_{p,m}$ be the OLS estimator of $\alpha$ from \eqref{eq:olsA}. Now, 
\begin{align*}
\hat{\alpha} \stackrel{P}{\rightarrow} \alpha, \quad n \rightarrow \infty,
\end{align*}
where ``$\stackrel{P}{\rightarrow}$" refers to convergence in $\mathbb{P}$-probability.
\end{proposition}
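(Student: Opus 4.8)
The plan is to reduce the consistency of $\hat\alpha$ to a law of large numbers for the realized variogram, after stripping away the intercept and the slowly-varying nuisance term by exploiting that the regressor $x_m$ is centered. First I would record two elementary identities for the centered vector: $x_m^T \mathbf{1} = \sum_{k=1}^m (\log k - \overline{\log m}) = 0$, and $x_m^T (\log k)_{k=1}^m = x_m^T x_m$ (adding the constant $\overline{\log m}$ to each coordinate does not change its inner product with the centered $x_m$). Writing \eqref{eq:reg_a} in vector form and using $\log(k/n) = \log k - \log n$, the term $c_p$ and the $n$-dependent constant $-a\log n$ both annihilate against $x_m$, leaving
\[
\hat{a}_{OLS} = a + \frac{x_m^T U_n}{x_m^T x_m} + \frac{x_m^T \epsilon_n}{x_m^T x_m}, \qquad \hat\alpha - \alpha = \frac{1}{p}\left(\frac{x_m^T U_n}{x_m^T x_m} + \frac{x_m^T \epsilon_n}{x_m^T x_m}\right),
\]
where $U_n = (U_{1,n},\dots,U_{m,n})^T$, $\epsilon_n = (\epsilon_{1,n},\dots,\epsilon_{m,n})^T$, and $x_m^T x_m$ is a fixed strictly positive constant for $m\ge 2$. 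It therefore suffices to show $x_m^T\epsilon_n \to 0$ and $x_m^T U_n \stackrel{P}{\rightarrow} 0$ as $n\to\infty$ with $m$ fixed.

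For the deterministic term I would again use $\sum_k (x_m)_k = 0$ to rewrite $x_m^T\epsilon_n = \sum_{k=1}^m (x_m)_k(\epsilon_{k,n}-\epsilon_{1,n}) = \frac{p}{2}\sum_{k=1}^m (x_m)_k \log\frac{L(k/n)}{L(1/n)}$. Slow variation of $L$ at zero (Assumption (\ref{ass:1})) gives $L(k/n)/L(1/n)\to 1$ for each fixed $k$, so every summand vanishes and the finite sum $x_m^T\epsilon_n\to 0$ deterministically.

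The stochastic term is the heart of the argument. Because $m$ is fixed, $x_m^T U_n \stackrel{P}{\rightarrow} 0$ follows once I show $U_{k,n}\stackrel{P}{\rightarrow} 0$ for each fixed $k$, i.e. $\hat\gamma_p(k/n;X)/\gamma_p(k/n;X)\stackrel{P}{\rightarrow} 1$, after which continuity of $\log$ at $1$ finishes the coordinate. This ratio convergence is exactly the law of large numbers for the power variation of fractal processes. In the Gaussian case (\ref{ass:lln1}) I would normalize the lag-$k$ increments, $Y_i := (X_{(i+k)/n}-X_{i/n})/\sqrt{\gamma_2(k/n;X)}$, so that the ratio equals $\tfrac{1}{n-k}\sum_i |Y_i|^p/\E|Y_1|^p$, and prove $L^2$-convergence by bounding the (Hermite-rank-two) covariances $|\mathrm{Cov}(|Y_i|^p,|Y_j|^p)|\le C\rho_{ij}^2$ and showing $(n-k)^{-2}\sum_{i,j}\rho_{ij}^2\to 0$; the decay of the increment correlations forced by the scaling (\ref{ass:1})--(\ref{ass:3}) makes this vanish for every $\alpha\in(-1/2,1/2)$. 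For the volatility-modulated process \eqref{eq:Xsv} and the $\BSS$ case (\ref{ass:lln2})--(\ref{ass:lln3}), the same ratio convergence holds by conditioning on the volatility path and invoking the laws of large numbers of \cite{BNCP09,BNCP11} and \cite{BLP16} — which is precisely what Assumption (\ref{ass:LLN}) is designed to provide. The main obstacle is thus this step: establishing (or citing) the LLN uniformly across the three settings, since the non-Gaussian cases demand controlling the interaction between $\sigma$ and the Gaussian increments rather than a direct variance computation.

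Finally, combining the two displayed limits through Slutsky's theorem yields $\hat\alpha - \alpha \stackrel{P}{\rightarrow} 0$, as claimed.
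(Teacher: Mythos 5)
Your proposal is correct and follows essentially the same route as the paper's proof: the same decomposition $\hat{\alpha}-\alpha = \frac{1}{p\,x_m^T x_m}\,x_m^T(U^m+\epsilon^m)$ exploiting $\sum_k x_{m,k}=0$, the same slow-variation argument killing the $\epsilon$-term, and the reduction of the stochastic term to the ratio convergence $\hat{\gamma}_p(k/n;X)/\gamma_p(k/n;X)\stackrel{P}{\rightarrow}1$, which the paper simply cites (Proposition 1 and Theorem 2 of \cite{BNCP09} and Theorem 3.1 of \cite{CHPP13}) across the three cases of (\ref{ass:LLN}). Your additional Hermite-rank-two variance sketch in the Gaussian case is a valid self-contained substitute for the citation, but it does not change the structure of the argument.
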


A number of studies have considered the asymptotic properties of the OLS estimates coming from \eqref{eq:olsA}, e.g. \cite{CH94}, \cite{DH99}, and \cite{coeurjolly01,coeurjolly08}. For a brief summary of this literature, see \cite{GSP12}, Section 3.1. The following theorem presents the details in the context of this paper.

\begin{theorem}\label{th:fbm}
Suppose (\ref{ass:CLT}) holds. Fix $p>0$, $m \in \N$, and let $\hat{\alpha} = \hat{\alpha}_{p,m}$ be the OLS estimator of $\alpha$ from \eqref{eq:olsA}. If (\ref{ass:clt2}) or (\ref{ass:clt3}) holds, we require $\xi \cdot \min\{p,1\} > 1/2$, cf. assumption (\ref{ass:4}). Now, as $n \rightarrow \infty$,
\begin{align*}
\sqrt{n}(\hat{\alpha} - \alpha) \stackrel{st}{\rightarrow} Z_p \cdot S_p, \qquad Z_p \sim N\left(0, \sigma_{m,p}^2\right),
\end{align*}
where
\begin{align*}
\sigma^2_{m,p} = \frac{x_m^T \Lambda_p x_m}{ (x_m^T x_m)^2 p^2},
\end{align*}
and $\Lambda_p = \{ \lambda_p^{k,v} \}_{k,v=1}^m$ is a real-valued $m \times m$ matrix with entries
\begin{align}\label{eq:lamLimTh}
\lambda_p^{k,v} =   \lim_{n\rightarrow \infty} n \cdot Cov \left(\frac{\hat{\gamma}_p(k/n;B^H)}{ \gamma_p(k/n;B^H)},\frac{\hat{\gamma}_p(v/n;B^H)}{ \gamma_p(v/n;B^H)}  \right), \quad k,v = 1, 2, \ldots, m,
\end{align}
where $\hat{\gamma}_{\cdot}(\cdot;B^H,\cdot)$ is given by \eqref{eq:gam_hat} and $B^H$ is a fractional Brownian motion with Hurst parameter $H = \alpha + \frac{1}{2}$.

Further, if (\ref{ass:clt1}) holds, then
\begin{align*}
S_p = 1,
\end{align*}
while if (\ref{ass:clt2}) or (\ref{ass:clt3}) holds, then
\begin{align}\label{eq:Sp}
S_p = \frac{ \sqrt{ \int_0^1 \sigma_s^{2p} ds} }{ \int_0^1 \sigma_s^p ds }.
\end{align}
Above ``st" denotes stable convergence (in law), see e.g. \cite{renyi63}.
\end{theorem}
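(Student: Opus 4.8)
The plan is to reduce $\sqrt{n}(\hat\alpha - \alpha)$ to a fixed linear functional of the normalized, centered power-variation vector, to dispose of the slowly varying bias, and then to invoke a (conditional) Breuer--Major/BNCP-type central limit theorem for the power variation of (conditionally) Gaussian processes. First I would exploit the algebra of OLS: since the components of $x_m$ sum to zero, both the intercept $c_p$ and the deterministic term $-\log n$ hidden in $\log|k/n|$ are annihilated by $x_m^T(\cdot)$, while $x_m^T (\log k)_{k=1}^m = x_m^T x_m$. Hence
\begin{align*}
\hat a_{OLS} - a = \frac{x_m^T U_n}{x_m^T x_m} + \frac{x_m^T \epsilon_n}{x_m^T x_m}, \qquad \sqrt{n}(\hat\alpha - \alpha) = \frac{1}{p}\,\frac{x_m^T\, \sqrt{n}\,U_n}{x_m^T x_m} + \frac{1}{p}\,\frac{\sqrt{n}\,x_m^T \epsilon_n}{x_m^T x_m},
\end{align*}
with $U_n = (U_{1,n},\dots,U_{m,n})^T$ and $\epsilon_n = (\epsilon_{1,n},\dots,\epsilon_{m,n})^T$. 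Because $m$ and $x_m$ are fixed, everything reduces to the joint behaviour of $\sqrt{n}\,U_n$.

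Second, I would show the bias term is asymptotically negligible. Writing $\epsilon_{k,n} = \tfrac p2\log L(k/n)$ and subtracting the ($k$-independent) value $\log L(1/n)$, which $x_m^T$ kills, I would use $\log L(k/n) - \log L(1/n) = \int_{1/n}^{k/n} L'(x)/L(x)\,dx$ together with (\ref{ass:99}), $|L'(x)| \le C(1 + x^{-\delta})$, and the fact that $L$ is bounded away from zero near the origin, to obtain $x_m^T \epsilon_n = O(n^{-(1-\delta)})$ for fixed $m$. Multiplying by $\sqrt n$ gives $O(n^{-(1/2-\delta)}) \to 0$ precisely because $\delta < 1/2$; this is exactly the role of (\ref{ass:99}), anticipated in the remark that it is needed only for asymptotic normality and not for consistency.

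Third -- the analytic heart -- I would establish the central limit theorem for $\sqrt{n}\,U_n$. Set $\tilde V_{k,n} = \hat\gamma_p(k/n;X)/\gamma_p(k/n;X)$; by Proposition \ref{prop:cons}, $\tilde V_{k,n} \to R$ with $R=1$ in the Gaussian case (\ref{ass:clt1}) and $R = \int_0^1\sigma_s^p\,ds/\E[\sigma_0^p]$ -- the same random limit for every $k$ -- in the stochastic-volatility cases. Since $U_{k,n} = \log \tilde V_{k,n} = \log R + \log(\tilde V_{k,n}/R)$ and $x_m^T \mathbf 1 = 0$ kills the common term $\log R$, a Taylor expansion reduces matters to the vector $\tfrac{\sqrt n}{R}(\tilde V_{k,n}-R)$, the quadratic remainder being $O_P(n^{-1/2})$. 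In case (\ref{ass:clt1}) the increments of $X$ form a stationary Gaussian family whose local covariance structure is, by the scaling (\ref{ass:1}), that of fractional Brownian motion with $H=\alpha+\tfrac12$; a Breuer--Major theorem (equivalently the Gaussian CLT of \cite{BNCP09,BNCP11}) then yields joint convergence of $\sqrt n(\tilde V_{k,n}-1)_{k=1}^m$ to $N(0,\Lambda_p)$, the $\sqrt n$-rate and the existence of the limit \eqref{eq:lamLimTh} being guaranteed by $\alpha<1/4$ (Hermite rank two with summable covariances). Applying the linear map $x_m^T(\cdot)/(p\,x_m^T x_m)$ produces $Z_p\sim N(0,\sigma^2_{m,p})$ and settles $S_p=1$.

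Finally, for the conditionally Gaussian cases (\ref{ass:clt2})--(\ref{ass:clt3}) I would condition on the volatility path and replace $X_{(i+k)/n}-X_{i/n}$ by $\sigma_{i/n}(G_{(i+k)/n}-G_{i/n})$ (respectively its $\BSS$ analogue). Tracking constants, $\tfrac{\sqrt n}{R}(\tilde V_{k,n}-R) \approx (\int_0^1\sigma_s^p\,ds)^{-1} n^{-1/2}\sum_i \sigma_{i/n}^p(\eta_{i,k}-1)$ with $\eta_{i,k}$ the normalized Gaussian power terms; the conditional variance converges to $\big(\int_0^1\sigma_s^{2p}ds\big)\big(\int_0^1\sigma_s^p ds\big)^{-2}$ times the fBm covariance, so the random scalar $S_p$ of \eqref{eq:Sp} factors out while $\Lambda_p$ remains the fBm object. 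As the limiting (conditional) variance is $\sigma$-measurable and hence random, the convergence is stable and the limit is the mixed normal $Z_p\cdot S_p$. The main obstacle is this last step: rigorously controlling the volatility-approximation error to $o_P(n^{-1/2})$ -- which is where assumption (\ref{ass:4}) and the hypothesis $\xi\cdot\min\{p,1\}>1/2$ enter -- and upgrading convergence in law to stable convergence against the $\sigma$-field generated by the volatility. For both I would lean directly on the blocking arguments and stable limit theorems for power variation of $\BSS$ and volatility-modulated processes in \cite{BNCP09,BNCP11}.
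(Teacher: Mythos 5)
Your proposal is correct and takes essentially the same route as the paper's own proof: the identical OLS reduction exploiting $\sum_{k=1}^m x_{m,k}=0$, negligibility of the slowly varying bias term via (\ref{ass:99}) (the paper argues by l'H\^opital where you integrate $L'/L$, an equivalent use of the assumption), and the joint CLT for the normalized variogram vector imported as a black box from \cite{BNCP09,BNCP11} (and \cite{CHPP13}), combined with the delta method / log-Taylor expansion and stable convergence to produce the mixed-normal limit and the factor $S_p$ in the stochastic-volatility cases. The only differences are cosmetic: your explicit cancellation of the common random factor $\log R$ by $x_m^T\mathbf{1}=0$ makes the volatility-case normalization slightly more transparent than the paper's notation, but the underlying argument is the same.
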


\begin{remark}
The limit in \eqref{eq:lamLimTh} exists for $k,v = 1, \ldots, m$ by \cite{BM83}, Theorem 1. See also Remark 3.3. in \cite{CHPP13}.
\end{remark}

Perhaps surprisingly, Theorem \ref{th:fbm} shows that the asymptotic distribution of the OLS estimator does not depend on the precise structure of the underlying process $X$, but only on the value of the fractal index $\alpha$, through the correlation structure of the increments of a fractional Brownian motion (fBm) with Hurst index $H = \alpha + 1/2$, and possibly the ``heteroskedasticity factor" $S_p$. The reason for this is that the small scale behavior of a process $X$ fulfilling assumption (\ref{ass:1}), will have the same small scale behavior as increments of the fBm. To see this, write
\begin{align}
r_n(j) &:= Corr\left( X_{\frac{j+1}{n} } - X_{\frac{j}{n}},X_{\frac{1}{n} } - X_{0} \right) \nonumber \\
	&= \frac{\gamma_2((j+1)/n;X) -2 \gamma_2(j/n;X) + \gamma_2((j-1)/n;X)}{2\gamma_2(1/n;X)} \nonumber \\
	&\rightarrow \frac{1}{2} \left( |j+1|^{2\alpha+1} -2 |j|^{2\alpha+1} +|j-1|^{2\alpha+1} \right), \quad n\rightarrow \infty, \label{eq:rnlim}
\end{align}
by assumption (\ref{ass:1}) and the properties of slowly varying functions. We recognize \eqref{eq:rnlim} as the correlation function of the increments of an fBm with Hurst index $H =\alpha+1/2$. As shown in the proof of Theorem \ref{th:fbm}, this will imply that the asymptotic variance of the estimator, $\sigma_{m,p}^2$, is the same for \emph{all} Gaussian processes fulfilling assumptions (\ref{ass:1})--(\ref{ass:99}), including the fBm. However, as the theorem also shows, the asymptotic distribution proves to be slightly different when we consider \emph{conditionally} Gaussian processes. In this case, the stochastic volatility component $\sigma$ introduces heteroskedasticity, which results in the extra factor $S_p$ in the central limit theorem. To make inference feasible in practice, we need to estimate this factor. For this, define
\begin{align}\label{eq:Sp_hat}
\widehat{S_p} = \frac{ \sqrt{m_{2p}^{-1}\hat{\gamma}_{2p}(1/n;X)}}{m_p^{-1}\hat{\gamma}_p(1/n;X)}, \qquad m_s := \frac{2^{s/2}}{\sqrt{\pi}}\Gamma\left(\frac{s+1}{2}\right),  \qquad p, s > 0,
\end{align}
where $\hat{\gamma}_{\cdot}$ is given in \eqref{eq:gam_hat}. We can prove the following.
\begin{proposition}\label{prop:S2fbm}
(i) Suppose (\ref{ass:lln1}) holds. Let $p>0$. Now,
\begin{align*}
\widehat{S_p}  \stackrel{P}{\rightarrow} 1, \quad n\rightarrow \infty.
\end{align*}

(ii) Suppose (\ref{ass:lln2}) or (\ref{ass:lln3}) holds. Let $p>0$. Now,
\begin{align*}
\widehat{S_p}  \stackrel{P}{\rightarrow} S_p, \quad n \rightarrow \infty,
\end{align*}
where $S_p$ is given in \eqref{eq:Sp}.
\end{proposition}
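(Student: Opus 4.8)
The estimator $\widehat{S_p}$ is built entirely from the two finest-lag sample variograms $\hat{\gamma}_p(1/n;X)$ and $\hat{\gamma}_{2p}(1/n;X)$, so the plan is to pin down the in-probability limits of these two quantities under a common normalization, observe that the normalization cancels in the ratio defining $\widehat{S_p}$, and then conclude by the continuous mapping theorem. Let $\tau_n^2$ denote the variance of a unit-lag increment of the Gaussian driver (so $\tau_n^2=\gamma_2(1/n;X)$ in the Gaussian case, and $\tau_n^2$ is the increment variance of the Gaussian core $\int g(\cdot-s)\,dW_s$ in the $\BSS$ case). The key claim I would establish is the pair of power-variation laws of large numbers
\begin{align}\label{eq:lln_pv}
\tau_n^{-p}\,\hat{\gamma}_p(1/n;X) \stackrel{P}{\rightarrow} m_p \int_0^1 \sigma_s^p\,ds, \qquad \tau_n^{-2p}\,\hat{\gamma}_{2p}(1/n;X) \stackrel{P}{\rightarrow} m_{2p} \int_0^1 \sigma_s^{2p}\,ds,
\end{align}
as $n\to\infty$, where in the Gaussian case (i) one has $\sigma_s\equiv 1$, so the right-hand sides reduce to $m_p$ and $m_{2p}$.

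To prove \eqref{eq:lln_pv} in the Gaussian case I would normalize to the stationary standard Gaussian sequence $Z_i^n := (X_{(i+1)/n}-X_{i/n})/\tau_n$, whose correlations $r_n(j)$ converge to the fBm correlations in \eqref{eq:rnlim}. Then $\tau_n^{-p}\hat{\gamma}_p(1/n;X)=(n-1)^{-1}\sum_i |Z_i^n|^p$ has mean $m_p$ by stationarity, and an $L^2$ argument gives the convergence: since $|Z|^p$ is even it has Hermite rank two, so $\mathrm{Cov}(|Z_i^n|^p,|Z_j^n|^p)\le C\,r_n(|i-j|)^2$, and $(n-1)^{-1}\sum_j r_n(j)^2\to 0$ for every $\alpha\in(-1/2,1/2)$ because $r_n(j)^2\sim j^{4\alpha-2}$ (the sum is summable for $\alpha<1/4$ and of order $n^{4\alpha-1}$ otherwise, so after dividing by $n-1$ it vanishes). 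Replacing $p$ by $2p$ gives the second limit. For the conditionally Gaussian and $\BSS$ cases (ii), \eqref{eq:lln_pv} is precisely the power-variation law of large numbers for volatility-modulated processes of \cite{BNCP09,BNCP11} and \cite{BLP16}; the integral $\int_0^1\sigma_s^p\,ds$ emerges as the limit of the Riemann sum of the local conditional variances, and this is the same convergence that underlies Proposition \ref{prop:cons}.

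With \eqref{eq:lln_pv} in hand I would divide numerator and denominator of $\widehat{S_p}$ by $\tau_n^p$, using $\sqrt{\tau_n^{2p}}=\tau_n^p$, to write
\begin{align*}
\widehat{S_p} = \frac{\sqrt{m_{2p}^{-1}(\tau_n^{-2p}\hat{\gamma}_{2p}(1/n;X))}}{m_p^{-1}(\tau_n^{-p}\hat{\gamma}_p(1/n;X))}.
\end{align*}
The scalings $\tau_n$ have cancelled, so passing to the limit via \eqref{eq:lln_pv} together with the continuous mapping theorem (the square root is continuous on $[0,\infty)$, and division is continuous where the denominator is bounded away from zero) gives, in case (i),
\begin{align*}
\widehat{S_p} \stackrel{P}{\rightarrow} \frac{\sqrt{m_{2p}^{-1}m_{2p}}}{m_p^{-1}m_p} = 1,
\end{align*}
and, in case (ii),
\begin{align*}
\widehat{S_p} \stackrel{P}{\rightarrow} \frac{\sqrt{m_{2p}^{-1}\,m_{2p}\int_0^1\sigma_s^{2p}\,ds}}{m_p^{-1}\,m_p\int_0^1\sigma_s^p\,ds} = \frac{\sqrt{\int_0^1\sigma_s^{2p}\,ds}}{\int_0^1\sigma_s^p\,ds} = S_p,
\end{align*}
which is \eqref{eq:Sp}.

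The main obstacle is \eqref{eq:lln_pv} in the conditionally Gaussian and $\BSS$ cases: controlling the error in the local Gaussian approximation of the increments and establishing the Riemann-sum convergence to $\int_0^1\sigma_s^p\,ds$ require the full machinery of \cite{BNCP09,BNCP11} and \cite{BLP16}, together with the kernel conditions (\ref{ass:5}) and the regularity assumption (\ref{ass:4}) on $\sigma$. By contrast, once \eqref{eq:lln_pv} is granted, the cancellation of $\tau_n$ and the continuous-mapping step are routine; the one point worth recording is that the ratio in $S_p$ is well defined because $\int_0^1\sigma_s^p\,ds>0$ almost surely, which follows from the positivity of the volatility process.
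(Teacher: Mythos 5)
Your proof is correct and follows essentially the same route as the paper: rewrite $\widehat{S_p}$ so that the increment-variance normalization (your $\tau_n$, the paper's $\gamma_2(1/n;X)^{1/2}$) cancels between numerator and denominator, then invoke the power-variation laws of large numbers and continuous mapping. The paper simply cites those LLNs (Proposition 1 of Barndorff-Nielsen, Corcuera and Podolskij (2009) for case (i) and Theorem 3.1 of Corcuera et al.\ (2013) for case (ii)), whereas you additionally sketch the Gaussian case via the Hermite-rank-two covariance bound; that sketch is sound, noting only that the uniform-in-$n$ control of $r_n(j)$ it relies on is exactly what assumption (\ref{ass:3}) (or its weaker variant in Remark 2.1) supplies.
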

Proposition \ref{prop:S2fbm} shows that $\widehat{S_p}$ of \eqref{eq:Sp_hat} is a suitable estimator for our purpose: when $X$ is Gaussian, the factor is asymptotically irrelevant, while when $X$ is non-Gaussian (volatility modulated) it provides the correct normalization. This justifies including the factor $\widehat{S_p}$, whether or not one believes the data is Gaussian, at least when any potential non-Gaussianity is volatility induced. In fact, the following corollary is a straightforward consequence of Theorem \ref{th:fbm}, Proposition \ref{prop:S2fbm}, and the properties of stable convergence; the corollary has obvious applications to feasibly conducting inference and making confidence intervals for $\alpha$.

\begin{corollary}\label{cor:feas}
Suppose the assumptions of Theorem \ref{th:fbm} hold. Now,
\begin{align*}
\sqrt{n} \frac{\hat{\alpha} - \alpha}{ \widehat{S_p}\sqrt{ \sigma_{m,p}^2(\hat{\alpha})}} \stackrel{d}{\rightarrow} N(0,1), \quad n \rightarrow \infty,
\end{align*}
where ``d" denotes convergence in distribution and $\sigma_{m,p}^2(\hat{\alpha})$ denotes the asymptotic variance calculated using the estimate $\hat{\alpha}$.
\end{corollary}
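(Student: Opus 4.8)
The plan is to combine the stable convergence delivered by Theorem \ref{th:fbm} with the consistency of the two studentizing factors, exploiting the fact that stable convergence (unlike ordinary convergence in distribution) permits division by random normalizers converging in probability to an $\F$-measurable limit. Write $Y_n := \sqrt{n}(\hat\alpha - \alpha)$ and $V_n := \widehat{S_p}\sqrt{\sigma_{m,p}^2(\hat\alpha)}$, so the statistic of interest is $Y_n/V_n$. By Theorem \ref{th:fbm} we have $Y_n \stackrel{st}{\rightarrow} Z_p\cdot S_p$, where $Z_p \sim N(0,\sigma_{m,p}^2)$ is defined on an extension of $(\Omega,\F,\mathbb{P})$ and is independent of $\F$, while $S_p$ (equal to $1$ under (\ref{ass:clt1}) and given by \eqref{eq:Sp} under (\ref{ass:clt2})--(\ref{ass:clt3})) is $\F$-measurable, being a functional of the volatility path.

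First I would show the denominator converges in probability to an $\F$-measurable, a.s.\ positive limit. Proposition \ref{prop:S2fbm} gives $\widehat{S_p}\stackrel{P}{\rightarrow}S_p$. For the variance factor, Proposition \ref{prop:cons} yields $\hat\alpha \stackrel{P}{\rightarrow}\alpha$; since $\sigma_{m,p}^2(\cdot)$ depends on $\alpha$ only through the entries $\lambda_p^{k,v}$ of \eqref{eq:lamLimTh}, which are normalized limiting covariances of increments of an fBm with Hurst index $H=\alpha+\frac12$ and hence continuous in $\alpha$, the continuous mapping theorem gives $\sigma_{m,p}^2(\hat\alpha)\stackrel{P}{\rightarrow}\sigma_{m,p}^2(\alpha)=\sigma_{m,p}^2$. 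Combining these, $V_n \stackrel{P}{\rightarrow} S_p\,\sigma_{m,p} =: V$, which is $\F$-measurable and strictly positive.

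Next I would invoke the Slutsky-type property of stable convergence: if $Y_n \stackrel{st}{\rightarrow} Y$ and $V_n \stackrel{P}{\rightarrow} V$ with $V$ an a.s.\ positive, $\F$-measurable random variable, then the pair converges jointly, $(Y_n,V_n)\stackrel{st}{\rightarrow}(Y,V)$, and the continuous mapping theorem yields $Y_n/V_n \stackrel{d}{\rightarrow} Y/V$ (see the properties of stable convergence, e.g.\ \cite{renyi63}). Here
\begin{align*}
\frac{Y}{V} = \frac{Z_p\cdot S_p}{S_p\,\sigma_{m,p}} = \frac{Z_p}{\sigma_{m,p}} \sim N(0,1),
\end{align*}
using $Z_p\sim N(0,\sigma_{m,p}^2)$, which is exactly the claimed limit.

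The crux is the cancellation of the stochastic factor $S_p$ in $Y/V$, and this is precisely where stable — rather than ordinary — convergence is indispensable: under (\ref{ass:clt2})--(\ref{ass:clt3}) the factor $S_p$ is genuinely random, so dividing the limit $Z_p\cdot S_p$ by $S_p$ is legitimate only because stable convergence furnishes the joint limit $(Z_p\cdot S_p,\, S_p\,\sigma_{m,p})$ in which $Z_p$ is independent of the common $\F$-measurable factor $S_p$. Consequently $S_p$ divides out exactly and the residual randomness from the volatility path vanishes from the feasible statistic, leaving a nuisance-free standard normal limit. The only mild technical point to check along the way is the continuity of $\alpha\mapsto\sigma_{m,p}^2(\alpha)$, which follows from the smooth dependence on $H$ of the fBm increment covariances entering \eqref{eq:lamLimTh}.
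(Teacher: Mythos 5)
Your proposal is correct and follows exactly the route the paper intends: the paper itself offers no separate proof, stating only that the corollary is ``a straightforward consequence of Theorem \ref{th:fbm}, Proposition \ref{prop:S2fbm}, and the properties of stable convergence,'' which is precisely your combination of the stable CLT, consistency of $\widehat{S_p}$ and $\sigma_{m,p}^2(\hat{\alpha})$, and the Slutsky-type property allowing division by an $\F$-measurable random normalizer so that $S_p$ cancels. Your additional remark on the continuity of $\alpha \mapsto \sigma_{m,p}^2(\alpha)$ fills in a detail the paper leaves implicit, but the argument is the same.
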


\begin{remark}
When using Corollary \ref{cor:feas} for hypothesis testing, we recommend calculating $\sigma_{m,p}^2(\cdot)$ using the value of $\alpha$ under the null, instead of $\hat{\alpha}$.
\end{remark}

To apply the above results we need to calculate the factor $\sigma_{m,p}^2$, which boils down to calculating the entries of the matrix $\Lambda_p$ given in equation \eqref{eq:lamLimTh}. Unfortunately, this is only feasible when $p=2$ and becomes increasingly cumbersome as $m$ increases. \citep[The already tedious calculation for $p=m=2$ is given in][Appendix B.]{BHLP16}. For this reason, we recommend Monte Carlo estimation of $\sigma_{m,p}^2$; in fact, we suggest using the finite sample analogue of this factor. The procedure is detailed in Appendix \ref{app:simSig}; in the next section we present an example of the output, when we study the effect of the choice of bandwidth, $m$.

\subsection{Choosing the bandwidth parameter}\label{sec:band}

The choice of bandwidth parameter $m$ is, in general, an open problem. Standard practice in the literature is to set $m = 2$ \citep[][Section 2.3]{GSP12}. Indeed, \cite{CH94} argue that the bias of the estimator increases with $m$ and \cite{DH99} present simulation evidence for the optimal value, in terms of mean squared error, being $m=2$. Setting $m=2$ amounts to estimating $\alpha$ by drawing a straight line between only the two points closest to the origin, $\log \hat{\gamma}_p(1/n;X)$ and $\log \hat{\gamma}_p(2/n;X)$, when running the OLS regression in \eqref{eq:reg_a}. While tempting from a bias viewpoint, we conjecture that this can result in increased variance of the estimator, by relying on just two points in the regression. In what follows, we examine this in more depth. To be specific, we consider the effect that the bandwidth has on the estimator of the fractal index; first on the theoretical (finite sample) variance of $\hat{\alpha}$, as derived in Theorem \ref{th:fbm} (Figure \ref{fig:sig2_m}), and then on the finite sample bias and mean squared error of the estimator when applied to simulated paths of the various processes of Table \ref{tab:ex} (Figure \ref{fig:br}).  For these investigations, we consider both $\alpha = -0.20$ (rough case) and $\alpha = 0.20$ (smooth case).

\begin{figure}[!t] 
\centering 
\includegraphics[width=\textwidth]{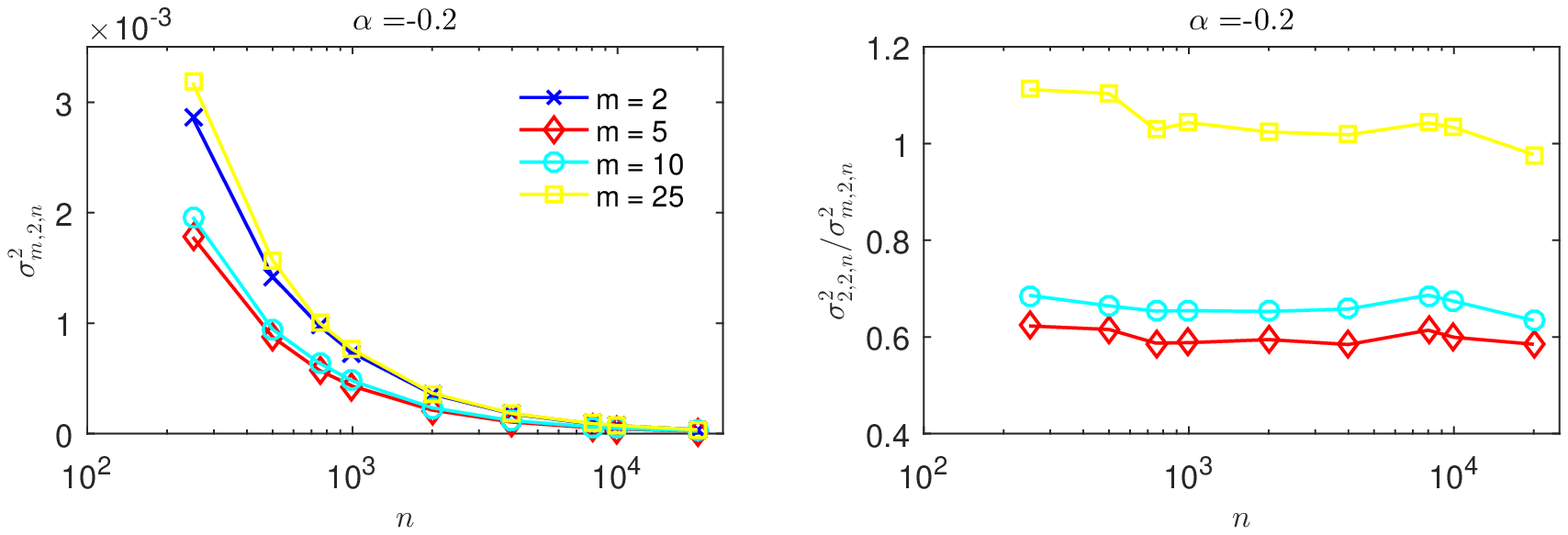}\\ 
\vspace{0.25cm}
\includegraphics[width=\textwidth]{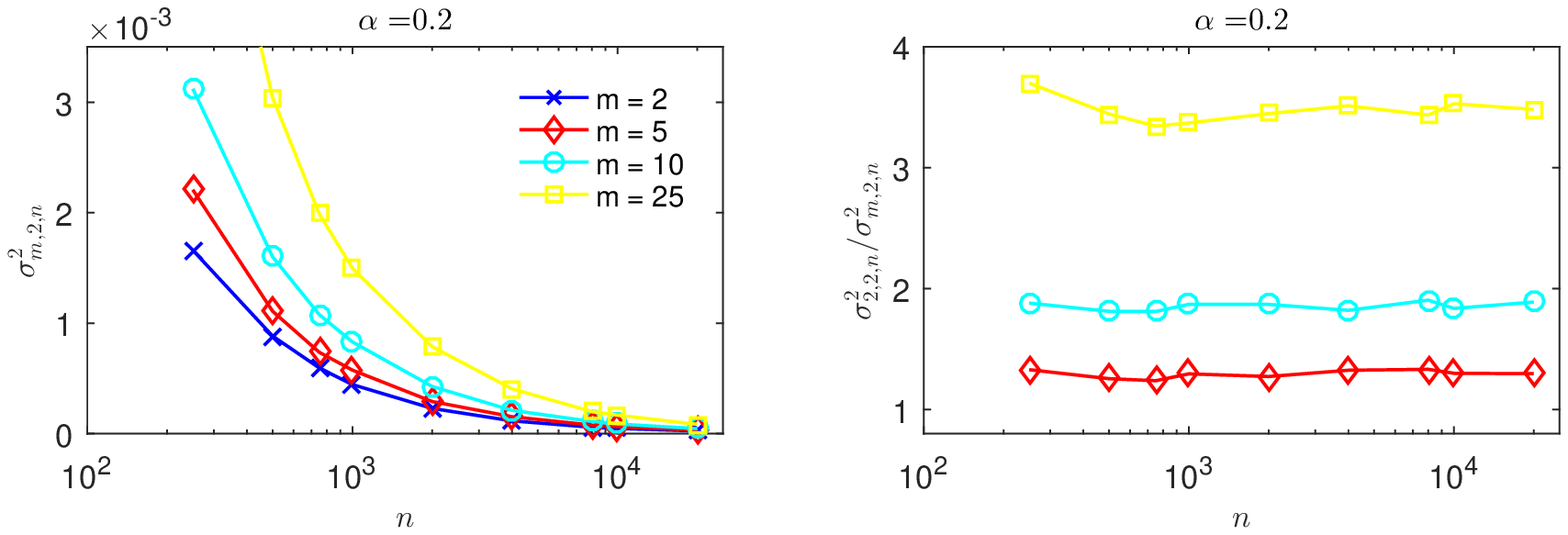}
\caption{Monte Carlo approximation ($B = 10\ 000$ replications) of the finite sample analogue of the variance of $\hat{\alpha}$, $\sigma^2_{m,p,n} \approx n^{-1}\sigma^2_{m,p}$, cf. Theorem \ref{th:fbm}. The true value of $\alpha$ is indicated above the plots. See Appendix \ref{app:simSig} for details of the calculations.}
\label{fig:sig2_m}
\end{figure}

Figure \ref{fig:sig2_m} studies the effect that the choice of bandwidth has on the variance of the estimator of $\alpha$: we plot the approximation of the finite sample variance of $\hat{\alpha}$, $\sigma_{m,p,n}^2$, which is approximately equal to $n^{-1}\sigma_{m,p}^2$, cf. Theorem \ref{th:fbm}. From the figure, we see that the choice of bandwidth indeed has an effect on the variance of the OLS estimator of $\alpha$. Interestingly, the effect is very different in the rough case, as compared to the smooth case. In the former, it is evident from the top left plot of Figure \ref{fig:sig2_m}, that the variance is minimized by an intermediate value of $m$ such as $m=5$ or $m = 10$. To further investigate this, the top right plot shows the ratio between the finite sample variance when $m = 2$ and when $m \in \{5,10,25\}$. Numbers less than one indicate that the variance of the estimator with $m=2$ is greater than the variance of the corresponding estimator with $m>2$, and vice versa. These ratios seem quite stable as a function of sample size $n$ and it is evident that, from a variance stand point, it is preferable to choose an intermediate $m>2$ --- indeed, the variance of the estimator is reduced by approximately $40\%$ when going from $m = 2$ to $m = 5$. These conclusions get turned on their heads when we consider the smooth case, $\alpha = 0.20$, in the bottom row: here it seems that $m=2$ is optimal.


\begin{figure}[!t] 
\centering 
\includegraphics[width=\textwidth]{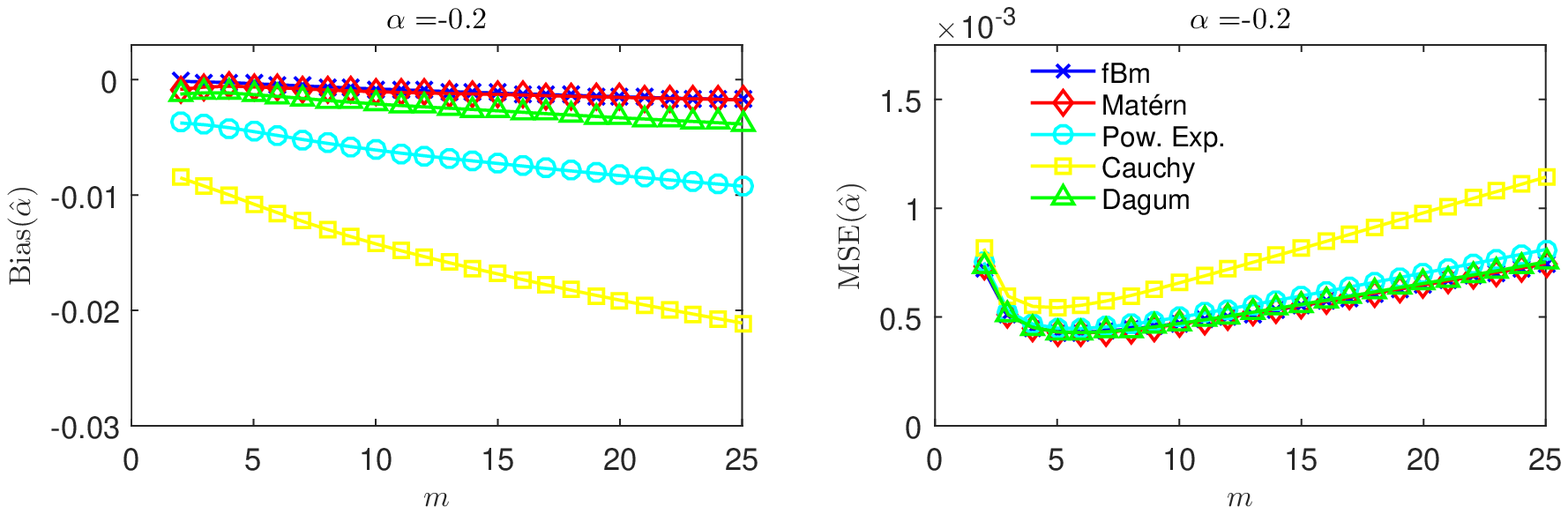} \\ 
\vspace{0.25cm}
\includegraphics[width=\textwidth]{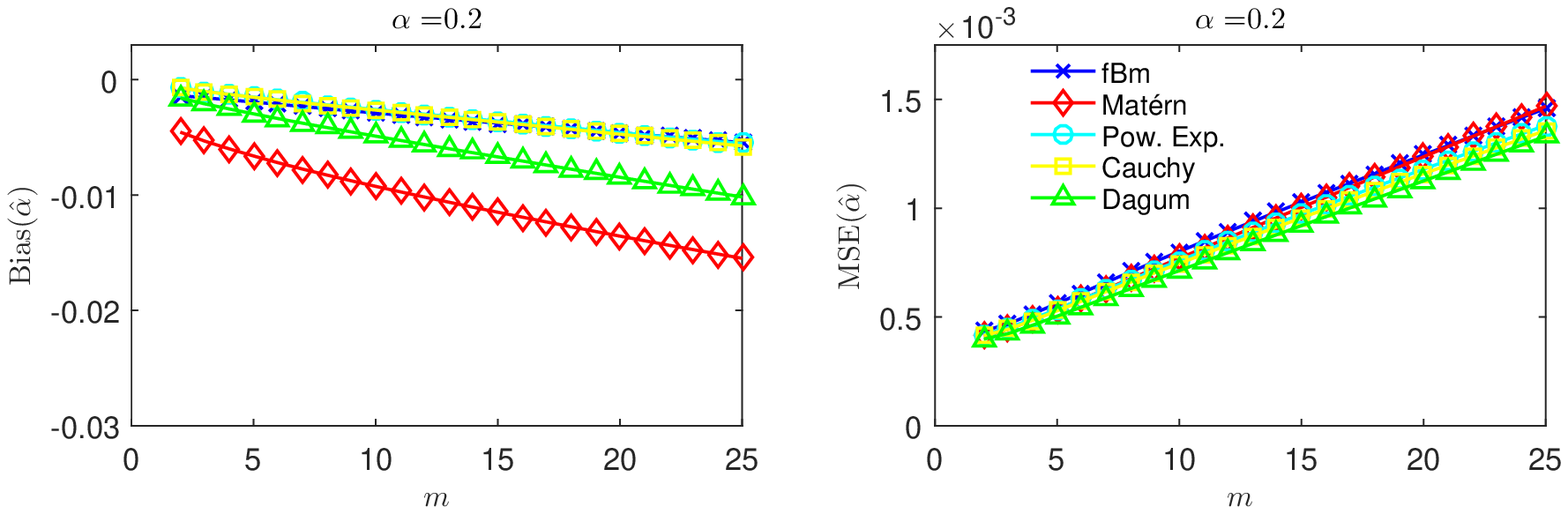} 
\caption{Monte Carlo approximation ($B = 10\ 000$ replications) of the finite sample bias (left) and mean squared error (MSE, right) of $\hat{\alpha}$, as function of the bandwidth $m$, for the processes of Table \ref{tab:ex}. We set $p = 2$ and $n = 1\ 000$. The parameter values are given in the text.}
\label{fig:br}
\end{figure}

We further investigate this through simulations as in \cite{DH99}: Figure \ref{fig:br} plots the bias (left) and mean squared error (right) of the estimator \eqref{eq:olsA}, as a function of bandwidth $m$, for the five parametric processes of Table \ref{tab:ex}. To calculate the finite sample bias and mean squared error of the estimator, we simulate $B = 10\ 000$ instances of each process, each with $n = 1\ 000$ observations; the true value of the fractal index in this exercise is $\alpha = -0.2$ (top row) and $\alpha = 0.2$ (bottom row). The scale parameter is set to $\beta = 1$. For the Cauchy and Dagum processes we additionally set $\tau = 1$ and $\tau = 0$, respectively. When looking at the rough case, $\alpha = -0.2$, the same conclusion as above emerges: even though the bias do increase, as expected, with increasing $m$, it is clear that the mean squared error is minimized for an $m>2$. In this case, i.e. for these parameter values and this sample size, the minimum is attained between $m = 5$ and $m=10$ for all five processes. We again conclude that an intermediate value for the bandwidth is preferable in finite samples when $\alpha < 0$. The smooth case, $\alpha = 0.2$, also matches what we found above: indeed, we find that both bias and mean squared error increase with increasing $m$, so here $m=2$ seems optimal.

In conclusion, the evidence of this section suggests that when the underlying process is rough, the optimal choice of bandwidth is some $m>2$ and we recommend an intermediate value such as $m=5$. In contrast, when the process is smooth, $m=2$ is preferable. Although setting $m = 2$ seems to be accepted practice in the literature, we believe that the rough case of $\alpha <0$ is arguably more relevant in empirical applications. For this reason we suggest using an intermediate value for the bandwidth parameter, unless one has reason to believe the underlying data to be smooth.\footnote{A fortiori, simulations not reported here suggest that an intermediate value for $m$ is also preferable when $\alpha \approx 0$.}

\subsection{Asymptotic theory in the presence of additive noise}\label{sec:assNoise}
Consider now the situation, where the observations of $X$, satisfying (\ref{ass:1})--(\ref{ass:3}), are contaminated by additive noise; that is, instead of observing $X$, we observe the process $Z$, given by
\begin{align}\label{eq:Z}
Z_{j/n} := \mu + X_{j/n} + u_j, \quad j = 1, 2, \ldots, n,
\end{align}
where $\mu \in \R$ is a constant and $u = \{u_j\}_{j=1}^n$ is a Gaussian iid noise sequence with mean zero and variance $\sigma_u^2 := Var(u_1) \geq 0$. (When $\sigma_u^2 = 0$ we mean that the noise is absent from the observations)

Since we observe $Z$, and not $X$, what is relevant for us is the ``contaminated", or ``noisy", variogram, i.e. the variogram of the observation process $Z$:
\begin{align}\label{eq:varioZ}
\gamma_2(h;Z) = \E [ |Z_{t+h} - Z_t|^2] = \gamma_2(h;X) + 2\sigma_u^2 = h^{2\alpha+1}L(h) + 2\sigma_u^2, \quad h \in \R,
\end{align}
where the last equality follows from Assumption (\ref{ass:1}). From this we see that when $\sigma_u^2 >0$, $\log \gamma_2(h;Z)$ will not be linear in $\log h$, hence the estimator \eqref{eq:olsA} of $\alpha$ will not be applicable; in fact, it is not hard to show that this estimator will be \emph{downwards} biased in the presence of noise, i.e. when applied to $\gamma_p(\cdot;Z)$. In fact, the following is true.
\begin{proposition}\label{prop:incons}
Suppose  that the observations of a process $Z$ are given by \eqref{eq:Z} with $\sigma_u^2>0$, where $X$ satisfies assumption (\ref{ass:LLN}). Fix $p>0$, $m \in \N$, and let $\hat{\alpha} = \hat{\alpha}_{p,m}$ be the OLS estimator of $\alpha$ from \eqref{eq:olsA} using the contaminated version of the empirical variogram $ \hat{\gamma}_{p}(\cdot;Z)$ in place of $\hat{\gamma}_{p}(\cdot;X)$ in the regression \eqref{eq:reg_a}. Now,
\begin{align*}
\hat{\alpha} \stackrel{P}{\rightarrow} -1/2, \quad n \rightarrow \infty.
\end{align*}
\end{proposition}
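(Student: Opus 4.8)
The plan is to show that additive noise \emph{flattens} the empirical variogram: for every fixed lag $k \in \{1,\dots,m\}$ the quantity $\hat{\gamma}_p(k/n;Z)$ converges in probability to one and the same constant, independent of $k$. Since the OLS slope is a contrast of $\log\hat{\gamma}_p^m$ against the \emph{centred} regressor $x_m$, a limiting variogram that is constant in $k$ forces the estimated slope to zero, whence $\hat{\alpha} \to -1/2$.

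First I would decompose the increments of the observed process. Because the constant $\mu$ cancels,
\begin{align*}
Z_{(i+k)/n} - Z_{i/n} = a_{i,n} + b_{i,k}, \qquad a_{i,n} := X_{(i+k)/n} - X_{i/n}, \quad b_{i,k} := u_{i+k} - u_i.
\end{align*}
The decisive observation is that, since the $u_j$ are i.i.d.\ $N(0,\sigma_u^2)$, the noise difference satisfies $b_{i,k} \sim N(0, 2\sigma_u^2)$ for \emph{every} $k \geq 1$; its law does not depend on $k$. Consequently $\E|b_{i,k}|^p = (2\sigma_u^2)^{p/2} m_p$ for all $k$, with $m_p$ as in \eqref{eq:Sp_hat}.

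Next I would establish, for each fixed $k$, that
\begin{align*}
\hat{\gamma}_p(k/n;Z) = \frac{1}{n-k}\sum_{i=1}^{n-k} |a_{i,n}+b_{i,k}|^p \stackrel{P}{\rightarrow} (2\sigma_u^2)^{p/2} m_p, \quad n \to \infty.
\end{align*}
Writing $|a_{i,n}+b_{i,k}|^p = |b_{i,k}|^p + \left(|a_{i,n}+b_{i,k}|^p - |b_{i,k}|^p\right)$, the first sum $\frac{1}{n-k}\sum_i |b_{i,k}|^p$ converges to $\E|b_{1,k}|^p = (2\sigma_u^2)^{p/2}m_p$ by a law of large numbers for the stationary, $k$-dependent sequence $\{|b_{i,k}|^p\}_i$ (the $u_j$ being i.i.d.\ Gaussian, all moments are finite). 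For the remainder I would use the elementary inequalities $\left||x|^p-|y|^p\right| \leq |x-y|^p$ when $p\leq 1$ and $\left||x|^p-|y|^p\right| \leq p|x-y|\left(|x|^{p-1}+|y|^{p-1}\right)$ when $p>1$; taking expectations and using $\E|a_{i,n}|^q = \gamma_q(k/n;X) \to 0$ as $n\to\infty$ for the relevant $q>0$ (which holds since, by \eqref{eq:gam_rel} and its analogues, $\gamma_q(k/n;X) \asymp (k/n)^{(2\alpha+1)q/2}L_q(k/n) \to 0$ because $2\alpha+1>0$), together with the Cauchy--Schwarz inequality and the boundedness of the noise moments, shows the remainder is $o_P(1)$.

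Finally I would feed this into the OLS formula. Coordinatewise convergence gives $\log\hat{\gamma}_p^m \stackrel{P}{\rightarrow} C\,\mathbf{1}_m$ with $C := \log\!\left((2\sigma_u^2)^{p/2}m_p\right)$ and $\mathbf{1}_m$ the $m$-vector of ones. Since $x_m$ is centred, $x_m^T \mathbf{1}_m = \sum_{k=1}^m(\log k - \overline{\log m}) = 0$, so by the continuous mapping theorem
\begin{align*}
\hat{a}_{OLS} = \frac{x_m^T \log\hat{\gamma}_p^m}{x_m^T x_m} \stackrel{P}{\rightarrow} \frac{C\, x_m^T \mathbf{1}_m}{x_m^T x_m} = 0,
\end{align*}
and hence $\hat{\alpha} = \hat{a}_{OLS}/p - 1/2 \stackrel{P}{\rightarrow} -1/2$. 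Note that the unknown scale $-\log n$ sitting inside $\log|k/n|$ is irrelevant precisely because of this centring. The main obstacle is the third step, and specifically controlling the $X$-contribution in the conditionally Gaussian cases where $X$ is given by \eqref{eq:Xsv} or \eqref{eq:Xbss}: there $\E|a_{i,n}|^q$ involves the stochastic volatility, so I would either condition on the volatility path (obtaining a Gaussian increment whose conditional variance is of order $(k/n)^{2\alpha+1}$) or invoke the moment bounds already used to inherit the fractal properties of $X$, in order to guarantee $\E|a_{i,n}|^q \to 0$. Once the $X$-increments are shown to be asymptotically negligible in $L^q$, the remaining steps are routine.
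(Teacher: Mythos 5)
Your proof is correct and takes essentially the same route as the paper's: both rest on the observation that, for every fixed lag $k$, the contaminated empirical variogram converges in probability to one and the same positive constant (the paper states this as $\hat{\gamma}_p(k/n;Z) \stackrel{P}{\rightarrow} \gamma_p(0;Z) = C_p\sigma_u^p$ without detail, which you supply), after which the centring $\sum_{k=1}^m x_{m,k} = 0$ of \eqref{eq:x_vanish} annihilates the constant vector, forcing $\hat{a}_{OLS} \stackrel{P}{\rightarrow} 0$ and hence $\hat{\alpha} \stackrel{P}{\rightarrow} -1/2$. One simplification for the conditionally Gaussian cases (\ref{ass:lln2})--(\ref{ass:lln3}), where your bound $\E|a_{i,n}|^q \rightarrow 0$ is delicate because the assumptions grant no moments of $\sigma$: bound the remainder by empirical rather than expected quantities --- by $\hat{\gamma}_p(k/n;X)$ when $p \leq 1$, and via Cauchy--Schwarz on the sums by $\hat{\gamma}_2(k/n;X)^{1/2}$ times an $O_P(1)$ factor when $p>1$ --- and note that these are $o_P(1)$ by exactly the LLN results already cited in the proof of Proposition \ref{prop:cons}, which disposes of the obstacle you flag without any conditioning argument.
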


Proposition \ref{prop:incons} shows that if the data are contaminated by noise, then estimates of the parameter $\alpha$ will be biased downwards towards $-1/2$, i.e. the lowest permissible value for $\alpha$. In other words, if the data are contaminated by noise, then the estimator of $\alpha$ considered above, will lead one to conclude that the data are more rough than what is actually the case for the underlying process $X$. This is an important point to note for the practitioner: when finding evidence of roughness (i.e. $\alpha < 0$) in data, it is crucial to consider whether this is due to an intrinsic property of the underlying data generating mechanism or whether it could simply be the product of noise, e.g. measurement noise.

Fortunately, it is possible to account for the noise when estimating $\alpha$ to arrive at a consistent estimator. For instance, \cite{BLP16} suggest a noise-robust estimator based on a non-linear least squares regression -- however, this estimator does not allow for the slowly varying function $L$ and requires the interval over which the process is observed to grow. Presently, therefore, we propose an alternative noise-robust estimator which is valid in our in-fill asymptotics setup and again relies on a simple OLS regression.

First, for an integer $\kappa \geq 2,$ define the function
\begin{align*}
f_p(h;Z,\kappa) := \gamma_p(\kappa h;Z)^{2/p} - \gamma_p(h;Z)^{2/p}, \quad  h\in \R.
\end{align*}
From \eqref{eq:varioZ} and assumption (\ref{ass:1}), we have
\begin{align}\label{eq:f_rel}
f_p(h;Z,\kappa) =  C_p^{2/p}|h|^{2\alpha+1}L_p^*(h;\kappa), \quad  h\in \R,
\end{align}
where it is easy to show that the function
\begin{align*}
L_p^*(h;\kappa) :=  \left(\kappa^{2\alpha+1}L(\kappa h)^{2/p} -L(h)^{2/p}\right), \quad  h\in \R,
\end{align*}
is slowly varying at zero. From this, it is clear that  the logarithm of $f_p(h;Z,\kappa)$ is -- up to the slowly varying function $L_p^*$ -- linear in $\log h$. This motivates a linear regression as the one in \eqref{eq:reg_a} with $\log \hat{f}_p$ in place of $\log \hat{\gamma}_p$: 
\begin{align}\label{eq:olsZ}
\log \hat{f}_p(k/n;Z,\kappa)  = b^* + a^* \log |k/n| + U^*_{k,n} + \epsilon^*_{k,n}, \quad k = 1, 2, \ldots, m,
\end{align}
where 
\begin{align*}
\hat{f}_p(k/n;Z,\kappa) := \hat{\gamma}_p(\kappa k/n;Z)^{2/p} - \hat{\gamma}_p(k/n;Z)^{2/p}
\end{align*}
is the empirical estimate of the function $f$, which is feasible to calculate from the observations $Z_{j/n}$.  Define the noise robust estimate of $\alpha$ as 
\begin{align}\label{eq:a_star}
\hat{\alpha}^* := \frac{\hat{a}_{OLS}^*}{2} - \frac{1}{2}, 
\end{align}
where $\hat{a}^*_{OLS}$ is the OLS estimate of $a^*$ from the linear regression \eqref{eq:olsZ}, analogous to \eqref{eq:olsA} with $\hat{f}_p$ in place of $\hat{\gamma}_p$. We can prove the following.

\begin{proposition}\label{prop:robust_cons}
Suppose  that the observations of a process $Z$ are given by \eqref{eq:Z} with $\sigma_u^2\geq 0$, where $X$ satisfies assumption (\ref{ass:LLN}). Fix $p>0$, $m \in \N$, and let $\hat{\alpha}^* = \hat{\alpha}^*_{p,m}$ be the OLS estimator of $\alpha$ from \eqref{eq:a_star}. Now,
\begin{align*}
\hat{\alpha}^* \stackrel{P}{\rightarrow} \alpha, \quad n \rightarrow \infty.
\end{align*}
\end{proposition}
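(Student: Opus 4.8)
The plan is to follow the template of the consistency proof for the non-robust estimator (Proposition \ref{prop:cons}), exploiting that by \eqref{eq:f_rel} the function $f_p(\cdot;Z,\kappa)$ has exactly the same scaling form as $\gamma_p(\cdot;X)$ had in the noiseless case: a power $|h|^{2\alpha+1}$ times a factor that is slowly varying at zero. First I would record the OLS algebra. With $\ell_m := (\log 1,\dots,\log m)^T$ one has $x_m^T\mathbf{1}=0$ and $x_m^T\ell_m = x_m^T x_m$, so substituting the regression representation \eqref{eq:olsZ} into $\hat a^*_{OLS} = x_m^T\log\hat f_p^m/(x_m^T x_m)$ annihilates the intercept $b^*$ and the $\log n$ offset and leaves
\begin{align*}
\hat a^*_{OLS} = a^* + \frac{x_m^T U^{*,m}}{x_m^T x_m} + \frac{x_m^T \epsilon^{*,m}}{x_m^T x_m}, \qquad a^* = 2\alpha+1,
\end{align*}
where $U^{*,m}$ and $\epsilon^{*,m}$ collect the terms $U^*_{k,n}$ and $\epsilon^*_{k,n}$ from \eqref{eq:olsZ}. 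Since $\hat\alpha^* = \hat a^*_{OLS}/2 - 1/2$ by \eqref{eq:a_star} and $a^*/2 - 1/2 = \alpha$, with $m$ fixed it suffices to show that both averaged error terms tend to zero in probability.

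The deterministic term is immediate. Here $\epsilon^*_{k,n} = \log L_p^*(k/n;\kappa)$, and since $L_p^*(\cdot;\kappa)$ is slowly varying at zero I would split $\epsilon^*_{k,n} = \log L_p^*(1/n;\kappa) + \log\{L_p^*(k/n;\kappa)/L_p^*(1/n;\kappa)\}$. The first summand is constant in $k$ and is killed by the centering $x_m^T\mathbf{1}=0$; the second converges to $0$ for every fixed $k$ as $n\to\infty$ by slow variation, whence $x_m^T\epsilon^{*,m}/(x_m^T x_m)\to 0$.

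Everything then reduces to a law of large numbers for the stochastic term, namely $U^*_{k,n} = \log\{\hat f_p(k/n;Z,\kappa)/f_p(k/n;Z,\kappa)\}\stackrel{P}{\rightarrow}0$, i.e. $\hat f_p(k/n;Z,\kappa)/f_p(k/n;Z,\kappa)\stackrel{P}{\rightarrow}1$, for each $k=1,\dots,m$. The structural input is \eqref{eq:varioZ}: on the ``variance scale'' $\gamma_p(\cdot;Z)^{2/p}$ the additive noise enters only through the constant $C_p^{2/p}\cdot 2\sigma_u^2$ (using that the increments of $Z$ are conditionally Gaussian, so that $\gamma_p(h;Z)^{2/p}=C_p^{2/p}(\gamma_2(h;X)+2\sigma_u^2)$), and this constant cancels identically in the difference defining $f_p$. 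I would therefore write
\begin{align*}
\hat f_p(k/n;Z,\kappa) = f_p(k/n;Z,\kappa) + \Delta_{\kappa k,n} - \Delta_{k,n}, \qquad \Delta_{j,n} := \hat\gamma_p(j/n;Z)^{2/p} - \gamma_p(j/n;Z)^{2/p},
\end{align*}
and control each $\Delta_{j,n}$ through the LLN for empirical variograms of (conditionally) Gaussian processes supplied by (\ref{ass:LLN}), combined with a continuous-mapping/delta argument for the map $x\mapsto x^{2/p}$ (and, in the stochastic-volatility cases, the conditional-Gaussian moment identities together with (\ref{ass:4})).

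The hard part will be this last step, since it is a delicate cancellation rather than a plain LLN. Both $\hat\gamma_p(\kappa k/n;Z)^{2/p}$ and $\hat\gamma_p(k/n;Z)^{2/p}$ converge to the \emph{same} nonzero constant $C_p^{2/p}\cdot 2\sigma_u^2$, while their difference $f_p(k/n;Z,\kappa)$ is asymptotically negligible, of order $(k/n)^{2\alpha+1}$. Establishing $\hat f_p/f_p\stackrel{P}{\rightarrow}1$ therefore requires showing that the sampling fluctuations of the two differenced variograms --- in particular the fluctuation contributed by the noise sequence $u$, which does not scale with the shrinking signal $f_p$ --- are asymptotically dominated by $f_p(k/n;Z,\kappa)$ itself. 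This is where the assumptions on $X$ and the relative sizes of the two scales $k$ and $\kappa k$ must be used carefully; once this domination is secured, the remaining ratio bookkeeping is routine.
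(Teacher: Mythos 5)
Your strategy is the same as the paper's: the paper proves your first two steps via its Lemma \ref{lem:L_star} (that $f$ inherits assumptions (\ref{ass:1})--(\ref{ass:99}) with $L^*$ as the slowly varying function) and then declares the remainder ``similar to the proof of Proposition \ref{prop:cons}''. Your OLS algebra and your treatment of the deterministic term $\epsilon^*_{k,n}$ are correct and match that template exactly. The problem is that your proposal stops at precisely the one step that is not routine: you reduce everything to $\hat{f}_p(k/n;Z,\kappa)/f_p(k/n;Z,\kappa)\stackrel{P}{\rightarrow}1$, correctly observe that this requires the sampling fluctuations $\Delta_{\kappa k,n}-\Delta_{k,n}$ to be dominated by the vanishing signal $f_p(k/n;Z,\kappa)\asymp n^{-(2\alpha+1)}$, and then write ``once this domination is secured, the remaining ratio bookkeeping is routine'' without securing it. That is a genuine gap, not a presentational one: the plain LLN/delta-method argument you invoke only yields $\Delta_{j,n}=\hat{\gamma}_p(j/n;Z)^{2/p}-\gamma_p(j/n;Z)^{2/p}=o_P(1)$, which says nothing against a target of size $n^{-(2\alpha+1)}$, since $\gamma_p(j/n;Z)^{2/p}$ converges to the nonzero constant $C_p^{2/p}\cdot 2\sigma_u^2$ when $\sigma_u^2>0$.

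Moreover, the domination you defer cannot be established on the full parameter range of the proposition, so no amount of ``care'' will close the gap as stated. Take $p=2$ and $\sigma_u^2>0$: up to $O_P(n^{-1})$ boundary terms, the pure-noise contribution to $\hat{f}_2$ equals $\frac{2}{n-k}\sum_i u_iu_{i+k}-\frac{2}{n-\kappa k}\sum_i u_iu_{i+\kappa k}$, two \emph{uncorrelated} mean-zero sums whose difference has standard deviation of exact order $\sigma_u^2 n^{-1/2}$; the cancellation in $f_p$ occurs only at the level of means, not of fluctuations. Since $f_2(k/n;Z,\kappa)\asymp n^{-(2\alpha+1)}$, the required domination holds if and only if $2\alpha+1<1/2$, i.e.\ $\alpha<-1/4$; for $\alpha\geq-1/4$ the noise fluctuation swamps the signal, $\hat{f}_2(k/n;Z,\kappa)$ is negative with non-vanishing probability, and the log--log regression degenerates. (For $\sigma_u^2=0$ your argument does go through for all $\alpha\in(-1/2,1/2)$, because then $\Delta_{j,n}=o_P(\gamma_p(j/n;X)^{2/p})$ scales with the signal itself, so no rate is needed.) For what it is worth, the paper's own proof never confronts this either --- Lemma \ref{lem:L_star} concerns only population quantities, and the ``details'' it skips are exactly the ones you flagged --- so your diagnosis of where the difficulty sits is accurate; but as a proof of the stated proposition your proposal, like the paper's, is incomplete at that point, and the step in question genuinely fails when $\sigma_u^2>0$ and $\alpha\geq-1/4$.
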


\begin{remark}
Proposition \ref{prop:robust_cons} allows for $\sigma_u^2 = 0$ i.e. for there to be no noise in the observations. In other words, the robust estimator is a consistent estimator of $\alpha$, also in the absence of noise.
\end{remark}

In Figure \ref{fig:robust} we illustrate the use of Proposition \ref{prop:robust_cons} by calculating the bias and root mean squared error (RMSE) of the two OLS estimators given in \eqref{eq:olsA} and \eqref{eq:a_star}, when applied to a process $Z$ with $\sigma_u^2>0$. The details are provided in the caption of the figure. The former estimator is not robust to the noise in $Z$, while the latter estimator is per Proposition \ref{prop:robust_cons}. It is clear how this manifests itself in a large bias in the OLS estimator \eqref{eq:olsA}. In fact, although the true value of the fractal index of the underlying process is $\alpha = -0.20$, the mean OLS estimates coming from the non-robust estimator is $-0.4608$, i.e. almost at the lowest permissible value of $-1/2$. This is of course a consequence of Proposition \ref{prop:incons}. In contrast, the robust estimator  \eqref{eq:a_star} proposed in this section is practically unbiased for most values of the parameter $\kappa$, at least when $\kappa \geq 4$.

Although the results of this section hold for all integer $\kappa \geq 2$, the actual finite sample performance of the results can be quite sensitive to this tuning parameter, as also witnessed in Figure \ref{fig:robust}. The optimal choice of $\kappa$ seems to depend on the number of observations $n$ and the variance of the noise $\sigma_u^2$; an investigation into the exact way this is the case is beyond the scope of the present paper. In practice, we recommend that the researcher run some numerical experiments on simulated data under conditions similar to those of the practical experiment; simulation experiments such as the one in Figure \ref{fig:robust} for example. We provide an example of how one can construct such a simulation experiment to arrive at a reasonable value for $\kappa$ in Section \ref{sec:emp_fin}, where we apply the robust estimator to a time series of financial prices.

The next result provides the central limit theorem, as it relates to the robust estimator.

\begin{figure}[!t] 
\centering 
\includegraphics[width=\textwidth]{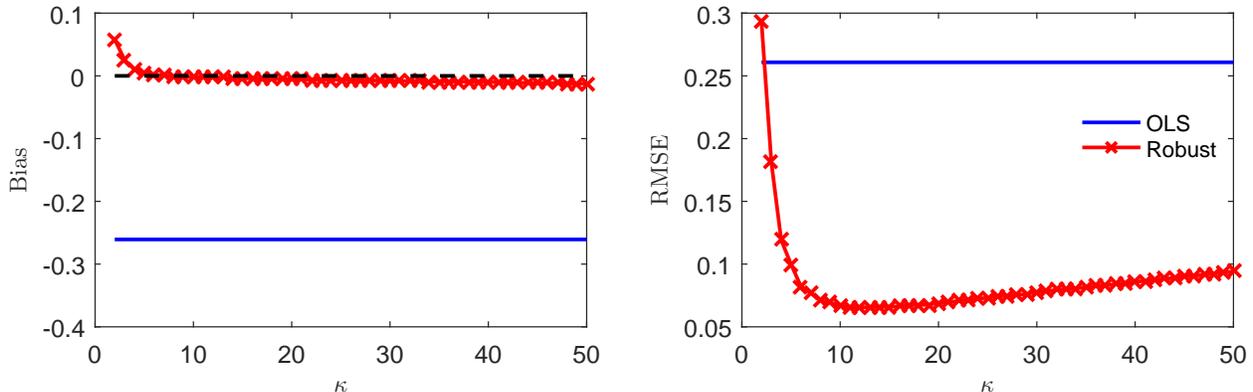}
\caption{Bias (left) and root mean squared error (RMSE, right) for the two OLS estimators \eqref{eq:olsA} and \eqref{eq:a_star}; blue line and red line with crosses, respectively. The bias and RMSE were calculated from $B = 10\ 000$ Monte Carlo simulations. The underlying data generating process for $X$ is $n = 2\ 500$ observations of an fBm with $\alpha = -0.20$, while $\mu = 1$ and $\sigma_u^2 = 0.05$ were used for the noise sequence. The bandwidth is $m = 5$. }
\label{fig:robust}
\end{figure}

\begin{theorem}\label{th:robust_clt}
Suppose that the observations of a process $Z$ are given by \eqref{eq:Z} with $\sigma_u^2\geq 0$, where $X$ satisfies assumption (\ref{ass:CLT}). Fix  $p > 0$, $m \in \N$, and let $\hat{\alpha}^* = \hat{\alpha}_{p,m}$  be the OLS estimator of $\alpha$ from \eqref{eq:a_star}. If (\ref{ass:clt2}) or (\ref{ass:clt3}) holds for $X$, we require $\xi \cdot \min\{p,1\} > 1/2$, cf. assumption (\ref{ass:4}). Now the following holds.
\begin{enumerate}[label=(\roman*)]
  \item \label{it:i} Let $\sigma_u^2 = 0$. As $n \rightarrow \infty$,
\begin{align*}
\sqrt{n}(\hat{\alpha}^* - \alpha) \stackrel{st}{\rightarrow} Z_p^* \cdot S_p, \qquad Z_p^* \sim N\left(0, \sigma_{m,p}^ {2,*}\right),
\end{align*}
where $S_p$  is as in Theorem \ref{th:fbm} and $\sigma^{2,*}_{m,p}$ is given in Appendix \ref{app:avar}.

\item \label{it:ii} Let $\sigma_u^2 > 0$. As $n \rightarrow \infty$,
\begin{align*}
\sqrt{n}\cdot |\hat{\alpha}^* - \alpha| \rightarrow \infty.
\end{align*}
\end{enumerate}

\end{theorem}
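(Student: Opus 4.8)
The plan is to handle the two regimes separately: part \ref{it:i} reduces to the apparatus already behind Theorem \ref{th:fbm}, while part \ref{it:ii} hinges on isolating the non-vanishing fluctuation that the noise injects into $\hat f_p$. Throughout I write $a^* = 2\alpha+1$, so that $\hat\alpha^*-\alpha = \tfrac12(\hat a^*_{OLS}-a^*)$. Since $x_m$ is centred, $x_m^T\mathbf 1 = 0$ and $x_m^T(\log|k/n|)_{k=1}^m = x_m^T x_m$, so the regression \eqref{eq:olsZ} gives the exact identity
\begin{align*}
\hat a^*_{OLS}-a^* = \frac{x_m^T(U^*+\epsilon^*)}{x_m^T x_m}, \qquad U^* = (U^*_{k,n})_{k=1}^m, \quad \epsilon^* = (\epsilon^*_{k,n})_{k=1}^m.
\end{align*}
When $\sigma_u^2 = 0$ the constant $\mu$ cancels in every increment, so $\hat\gamma_p(\cdot;Z) = \hat\gamma_p(\cdot;X)$ and the problem is genuinely noise-free. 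The deterministic term $\epsilon^*_{k,n} = \log L_p^*(k/n;\kappa)$ is governed by a slowly varying function (cf. \eqref{eq:f_rel}); first I would show, exactly as in the proof of Theorem \ref{th:fbm} and using assumption \ref{ass:99}, that $\sqrt n\,x_m^T\epsilon^* \to 0$, so this term is asymptotically negligible.

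For the stochastic term I would linearise. Writing $V_{\ell,n} := \hat\gamma_p(\ell/n;X)/\gamma_p(\ell/n;X)$ and expanding $\hat\gamma_p(\ell/n;X)^{2/p} = \gamma_p(\ell/n;X)^{2/p}V_{\ell,n}^{2/p}$ about $V_{\ell,n}=1$, a delta-method expansion of $U^*_{k,n} = \log\big(\hat f_p(k/n;X,\kappa)/f_p(k/n;X,\kappa)\big)$ gives
\begin{align*}
\sqrt n\,U^*_{k,n} = \frac{2}{p}\Big[w_1\,\sqrt n(V_{\kappa k,n}-1) - w_2\,\sqrt n(V_{k,n}-1)\Big] + o_P(1),
\end{align*}
where $w_1 = \kappa^{2\alpha+1}/(\kappa^{2\alpha+1}-1)$ and $w_2 = 1/(\kappa^{2\alpha+1}-1)$ emerge as the limits of $\gamma_p(\kappa k/n;X)^{2/p}/f_p(k/n)$ and $\gamma_p(k/n;X)^{2/p}/f_p(k/n)$ once the slowly varying ratios tend to constants. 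The vector $\big(\sqrt n(V_{\ell,n}-1)\big)$ over the finitely many lags $\ell\in\{k,\kappa k:k\le m\}$ converges stably to a centred Gaussian — multiplied by $S_p$ in cases \ref{ass:clt2}--\ref{ass:clt3} — with covariances fixed by the fBm increment correlation \eqref{eq:rnlim}, by the very results of \cite{BNCP09,BNCP11} underlying Theorem \ref{th:fbm}; the condition $\xi\min\{p,1\}>1/2$ controls the volatility term as before. Consequently $\sqrt n\,U^*$ is a fixed linear image of this Gaussian limit, and $\sqrt n(\hat\alpha^*-\alpha) = \tfrac12 x_m^T\sqrt n\,U^*/(x_m^Tx_m)+o_P(1) \stackrel{st}{\rightarrow} Z_p^*\cdot S_p$ with $Z_p^*\sim N(0,\sigma_{m,p}^{2,*})$, where $\sigma_{m,p}^{2,*}$ is the OLS quadratic form of the covariance matrix assembled from $w_1,w_2$ and the $\lambda_p^{\cdot,\cdot}$ of \eqref{eq:lamLimTh} — this is the quantity recorded in Appendix \ref{app:avar}. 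Stable convergence and the factor $S_p$ transfer verbatim from Theorem \ref{th:fbm}.

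For part \ref{it:ii} the decisive observation is that the noise leaves the population object $f_p$ unchanged but contaminates its estimator at a far coarser scale. Decomposing $Z_{(i+\ell)/n}-Z_{i/n} = (X_{(i+\ell)/n}-X_{i/n}) + (u_{i+\ell}-u_i)$, and noting that for fixed $\ell$ the $X$-increment is $O_P(n^{-(\alpha+1/2)})\to0$ while the noise increment is $O_P(1)$, I would show $\hat\gamma_p(\ell/n;Z) = \tfrac1{n-\ell}\sum_i|u_{i+\ell}-u_i|^p + o_P(n^{-1/2})$, whose mean $m_p(2\sigma_u^2)^{p/2}$ is independent of $\ell$. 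A central limit theorem for this $\ell$-dependent functional of the i.i.d.\ Gaussian sequence $u$ then yields, jointly over $\ell\in\{k,\kappa k:k\le m\}$,
\begin{align*}
\sqrt n\Big(\hat\gamma_p(\ell/n;Z)^{2/p}-\gamma_p(\ell/n;Z)^{2/p}\Big) \stackrel{d}{\rightarrow} N(0,\Sigma_u),
\end{align*}
with $\Sigma_u$ strictly positive definite. The $\ell$-independent levels cancel in $\hat f_p(k/n;Z,\kappa) = \hat\gamma_p(\kappa k/n;Z)^{2/p}-\hat\gamma_p(k/n;Z)^{2/p}$, leaving $\hat f_p(k/n;Z,\kappa)-f_p(k/n;Z,\kappa) = n^{-1/2}\Xi_{k,n}$ with $\Xi_{k,n}$ converging to a nondegenerate Gaussian vector (nondegenerate precisely because $\kappa\ge2$ makes the two lags $k$ and $\kappa k$ distinct, so the fluctuations do not cancel).

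Finally I would turn this into the claimed divergence. By \eqref{eq:f_rel}, $f_p(k/n;Z,\kappa) = \Theta(n^{-(2\alpha+1)})$, so the relative perturbation is $\hat f_p/f_p - 1 = n^{2\alpha+1/2}\,\tilde\Xi_{k,n}$ with $\tilde\Xi_{k,n}$ converging to a nondegenerate Gaussian limit. Linearising $U^*_{k,n} = \log(\hat f_p/f_p)$ — legitimate because Proposition \ref{prop:robust_cons} already guarantees $\hat f_p/f_p\stackrel{P}{\rightarrow}1$ — and projecting onto $x_m$ gives
\begin{align*}
\sqrt n(\hat\alpha^*-\alpha) = \tfrac12\,\frac{x_m^T\sqrt n\,U^*}{x_m^Tx_m} \sim \tfrac12\,n^{2\alpha+1}\,\frac{x_m^T\tilde\Xi}{x_m^Tx_m}.
\end{align*}
Since $\alpha>-\tfrac12$ we have $2\alpha+1>0$, so $n^{2\alpha+1}\to\infty$; and $x_m^T\tilde\Xi$ is a nondegenerate Gaussian (its variance is the quadratic form of $\Sigma_u$ against the nonzero weights $\propto x_{m,k}k^{-(2\alpha+1)}$), so it is bounded away from $0$ in probability. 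Hence $\sqrt n|\hat\alpha^*-\alpha|\to\infty$. The hard part is the nondegeneracy in part \ref{it:ii}: establishing the CLT for the nonlinear $\ell$-dependent noise functionals and, above all, verifying that $\Sigma_u$ (and therefore the projected variance) is strictly positive — i.e.\ that the noise does not cancel between the two lags in $\hat f_p$ nor under the OLS projection. In the Gaussian case these covariances are explicit; for $p=2$, for instance, a direct computation gives $\lim_n n\,\mathrm{Var}\big(\hat\gamma_2(\kappa k/n;Z)-\hat\gamma_2(k/n;Z)\big) = 8\sigma_u^4 > 0$, confirming the claim.
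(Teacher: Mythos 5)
Your overall strategy coincides with the paper's. For part \ref{it:i} the paper likewise reduces to Theorem \ref{th:fbm} via the delta method after checking (through Lemma \ref{lem:L_star}) that $f_p$ inherits (\ref{ass:1})--(\ref{ass:99}) with slowly varying function $L^*$; your weights $w_1=\kappa^{2\alpha+1}/(\kappa^{2\alpha+1}-1)$ and $w_2=1/(\kappa^{2\alpha+1}-1)$ reproduce exactly the matrix $\Sigma_1$ of Appendix \ref{app:avar}, so this part is fine (just make explicit that the $\sqrt n\,x_m^T\epsilon^*\to 0$ step needs $L^*$ to satisfy (\ref{ass:99}), which is the content of that lemma). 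For part \ref{it:ii} you use the same decomposition as the paper, $\hat f_2(\cdot;Z,\kappa)=\hat f_2(\cdot;X,\kappa)+\hat f_2(\cdot;u,\kappa)+2\hat f_{1,1}(\cdot;X,u,\kappa)$, and your explicit computation $\lim_n n\,\mathrm{Var}\bigl(\hat\gamma_2(\kappa k/n;Z)-\hat\gamma_2(k/n;Z)\bigr)=8\sigma_u^4$ is correct and usefully substantiates the nondegeneracy that the paper dismisses as ``standard''.

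There is, however, an internal inconsistency in your part \ref{it:ii} over the parameter range $(-1/2,1/4)$ permitted by (\ref{ass:CLT}). Your own formula $\hat f_p/f_p-1=n^{2\alpha+1/2}\,\tilde\Xi_{k,n}$ shows that for $\alpha>-1/4$ the relative perturbation \emph{diverges}, so the log-linearization of $U^*_{k,n}$ is not legitimate there; and the appeal to Proposition \ref{prop:robust_cons} cannot repair this, since that proposition is a statement about $\hat\alpha^*$, not about $\hat f_p/f_p$, and with $\sigma_u^2>0$ the ratio does \emph{not} tend to $1$ in that regime (indeed $\hat f_p$ can be negative with non-vanishing probability, so $\log\hat f_p$ need not even be defined). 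Symmetrically, your intermediate claim $\hat\gamma_p(\ell/n;Z)=\frac{1}{n-\ell}\sum_i|u_{i+\ell}-u_i|^p+o_P(n^{-1/2})$ fails for $\alpha\le-1/4$, because the omitted $X$-level $\asymp n^{-p(\alpha+1/2)}$ is then not $o_P(n^{-1/2})$ — this one is harmless, since centering at $\gamma_p(\ell/n;Z)$ absorbs the level, but it should be restated in centered form. The upshot is that your derivation of the rate $n^{2\alpha+1}$ is rigorous only for $\alpha<-1/4$; for $\alpha\in[-1/4,1/4)$ you need a separate argument (there the noise fluctuation swamps $f_p\asymp n^{-(2\alpha+1)}$ entirely, driving $\hat\alpha^*$ away from $\alpha$, so $\sqrt n|\hat\alpha^*-\alpha|\to\infty$ for a cruder reason). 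In fairness, the paper's own proof is equally terse and has the mirror-image defect — its assertion that ``the second term will diverge'' is literally true only for $\alpha>-1/4$ — so your attempt matches the paper's level of rigor while being more explicit; but a complete proof must treat the two regimes $\alpha<-1/4$ and $\alpha\ge-1/4$ by distinct mechanisms, and neither you nor the paper does so.
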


\begin{remark}
A feasible central limit theorem in the case of \ref{it:i} is straightforwardly constructed in the same way as in Corollary \ref{cor:feas}, including Monte Carlo estimation of $\sigma^{2,*}_{m,p}$.
\end{remark}

As shown in \ref{it:ii} of Theorem \ref{th:robust_clt}, the presence of the noise will unfortunately result in a variance of $\hat{\alpha}^*$, which decays slower than $\sqrt{n}$; indeed, the exact distribution of $\hat{\alpha}^*$ is difficult to derive and even harder to feasibly estimate. 

\subsubsection{A test for the presence of noise}\label{sec:testNoise}
Using the above, we can now construct a test for whether the observed time series $Z$ contains noise or not. To be specific, we are interested in testing the null hypothesis 
\begin{align}\label{eq:noise_test}
H_0: \sigma_u = 0 \qquad \textnormal{against the alternative} \qquad H_1: \sigma_u > 0.
\end{align}
Tests of this kind, in the context of time series of asset prices, were considered in \cite{ASX18}, where the authors develop a test for the presence of market microstructure noise in high frequency data. The test proposed here is similar in spirit to the test of \cite{ASX18} and in Section \ref{sec:emp_fin} we briefly consider testing for the presence of market microstructure in high frequency asset prices as well.

To device the test, we consider the difference between the robust estimator $\hat{\alpha}^*$ from \eqref{eq:a_star} and the usual (non-robust) estimator $\hat{\alpha}$ from \eqref{eq:olsA}. From Propositions \ref{prop:cons} and \ref{prop:robust_cons} is is immediately clear that under $H_0$
\begin{align*}
 \hat{\alpha}^* - \hat{\alpha} \stackrel{P}{\rightarrow} 0, \quad \textnormal{as } n\rightarrow \infty,
\end{align*}
while under $H_1$, Proposition \ref{prop:incons} additionally implies that
\begin{align*}
 \hat{\alpha}^* - \hat{\alpha} \stackrel{P}{\rightarrow} \alpha + 1/2 > 0, \quad \textnormal{as } n\rightarrow \infty.
\end{align*}
Analogously to Theorem \ref{th:robust_clt}, we can also prove the following.
\begin{theorem}\label{th:A}
Suppose the setup of Theorem \ref{th:robust_clt}. Now the following holds.
\begin{enumerate}[label=(\roman*)]
  \item \label{it:i} Let $\sigma_u^2 = 0$. As $n \rightarrow \infty$,
\begin{align*}
\sqrt{n}(\hat{\alpha}^* - \hat{\alpha}) \stackrel{st}{\rightarrow} Z_p^{**} \cdot S_p, \qquad Z_p^{**} \sim N\left(0, \sigma_{m,p}^ {2,**}\right),
\end{align*}
where $S_p$  is as in Theorem \ref{th:fbm} and $\sigma^{2,**}_{m,p}$ is given in Appendix \ref{app:avar}.

\item \label{it:ii} Let $\sigma_u^2 > 0$. As $n \rightarrow \infty$,
\begin{align*}
\sqrt{n}\cdot (\hat{\alpha}^* - \hat{\alpha}) \stackrel{P}{\rightarrow} \infty.
\end{align*}
\end{enumerate}
\end{theorem}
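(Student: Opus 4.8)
The plan is to prove part (i) by realizing both estimators as asymptotically linear functionals of one and the same Gaussian limit, so that their difference is automatically jointly stably Gaussian; part (ii) will then follow almost immediately from the consistency results already established.

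First I would record the reduction available when $\sigma_u^2 = 0$: here $Z = \mu + X$, and since a deterministic shift $\mu$ cancels in every increment, $\hat{\gamma}_p(\cdot;Z) = \hat{\gamma}_p(\cdot;X)$. Consequently both $\hat{\alpha}$ from \eqref{eq:olsA} and $\hat{\alpha}^*$ from \eqref{eq:a_star} are deterministic functions of the single vector of normalized empirical variograms
\begin{align*}
V_n := \left( \sqrt{n}\left( \frac{\hat{\gamma}_p(j/n;X)}{\gamma_p(j/n;X)} - 1 \right) \right)_{j=1}^{\kappa m}.
\end{align*}
The central analytic input, which I would establish exactly as in the proofs of Theorems \ref{th:fbm} and \ref{th:robust_clt} (via the Breuer--Major-type results of \cite{BNCP09,BNCP11} and \cite{BM83}), is the joint stable convergence $V_n \stackrel{st}{\rightarrow} \mathcal{Z} \cdot S_p$, where $\mathcal{Z} \sim N(0,\Sigma)$ and $\Sigma$ is the $\kappa m \times \kappa m$ matrix obtained by extending the entries $\lambda_p^{k,v}$ of \eqref{eq:lamLimTh} to all lags $1,\ldots,\kappa m$ (their existence being guaranteed by the remark following Theorem \ref{th:fbm}), and $S_p$ is the heteroskedasticity factor of Theorem \ref{th:fbm}, equal to $1$ under \ref{ass:clt1}.

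Next I would linearize each estimator against $V_n$. Writing $\log \hat{\gamma}_p(k/n) = \log \gamma_p(k/n) + \log(1 + n^{-1/2}V_{n,k})$ and using that $x_m$ is orthogonal to constants (so the intercept $c_p$ drops out of the OLS projection) together with assumption \ref{ass:99} (which kills the slowly varying contribution $\log L_p(k/n)$ at rate $\sqrt{n}$), a first-order expansion of the logarithm gives $\sqrt{n}(\hat{\alpha} - \alpha) = w^{\top} V_n + o_{\mathbb{P}}(1)$ for an explicit vector $w$ supported on lags $1,\ldots,m$. For the robust estimator I would substitute $\hat{\gamma}_p(\cdot)^{2/p} = \gamma_p(\cdot)^{2/p}(1 + n^{-1/2}V_{n,\cdot})^{2/p}$ into $\hat{f}_p$ and Taylor-expand the power $h \mapsto h^{2/p}$; the scaling relation \eqref{eq:gam_rel} and slow variation of $L$ then yield $\gamma_p(\kappa k/n)^{2/p}/f_p(k/n) \to \kappa^{2\alpha+1}/(\kappa^{2\alpha+1}-1)$ and $\gamma_p(k/n)^{2/p}/f_p(k/n) \to 1/(\kappa^{2\alpha+1}-1)$, so that $\sqrt{n}(\hat{\alpha}^* - \alpha) = (w^*)^{\top} V_n + o_{\mathbb{P}}(1)$ with $w^*$ supported on the lags $k$ and $\kappa k$, $k = 1,\ldots,m$.

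Then part (i) follows: $\sqrt{n}(\hat{\alpha}^* - \hat{\alpha}) = (w^* - w)^{\top} V_n + o_{\mathbb{P}}(1) \stackrel{st}{\rightarrow} (w^* - w)^{\top} \mathcal{Z} \cdot S_p =: Z_p^{**} \cdot S_p$, since stable convergence is preserved under the fixed linear map $v \mapsto (w^* - w)^{\top} v$. The limit is centered Gaussian with variance $\sigma_{m,p}^{2,**} = (w^* - w)^{\top} \Sigma (w^* - w)$, matching the expression recorded in Appendix \ref{app:avar}, multiplied by the common factor $S_p$. For part (ii), Propositions \ref{prop:incons} and \ref{prop:robust_cons} give $\hat{\alpha} \stackrel{P}{\rightarrow} -1/2$ and $\hat{\alpha}^* \stackrel{P}{\rightarrow} \alpha$, whence $\hat{\alpha}^* - \hat{\alpha} \stackrel{P}{\rightarrow} \alpha + 1/2 > 0$; for any $M > 0$ one then has $\mathbb{P}(\sqrt{n}(\hat{\alpha}^* - \hat{\alpha}) > M) = \mathbb{P}(\hat{\alpha}^* - \hat{\alpha} > M/\sqrt{n}) \to 1$ because $M/\sqrt{n} \to 0 < \alpha + 1/2$, which is precisely the claimed divergence. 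The main obstacle is the joint stable convergence of $V_n$ across all lags up to $\kappa m$ together with the careful bookkeeping of the delta-method coefficients, but this is essentially covered by the arguments already developed for Theorems \ref{th:fbm} and \ref{th:robust_clt}; the genuinely new observation is merely that both estimators are linear functionals of the \emph{same} limiting vector, so no cross-term analysis beyond $\Sigma$ is needed.
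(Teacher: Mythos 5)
Your proposal is correct and is essentially the paper's own argument: the paper likewise reduces part (i) to the joint stable CLT for the normalized empirical variograms over the extended lag set (writing $\hat{\alpha}^* - \hat{\alpha}$ as a single OLS-type functional of $\log\bigl(\hat{f}_p^m/(\hat{\gamma}_p^m)^{2/p}\bigr)$, then invoking Lemma \ref{lem:L_star} and the delta method, whose quadratic form with the matrix $\Sigma_2$ in Appendix \ref{app:avar} coincides with your $(w^*-w)^{\top}\Sigma(w^*-w)$), and proves part (ii) exactly as you do from Propositions \ref{prop:incons} and \ref{prop:robust_cons}. Your separate linearization of the two estimators against a common limit vector, rather than combining them into one log-ratio statistic first, is only a cosmetic reorganization of the same delta-method computation.
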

Define
\begin{align}\label{eq:A}
\hat{A}_n := \sqrt{n}\frac{\hat{\alpha}^* - \hat{\alpha}}{ \hat{S}_p \sqrt{ \sigma_{m,p}^{2,**}(\hat{\alpha}^*)}}.
\end{align}
The following corollary is a straightforward application of  Theorem \ref{th:A}.
\begin{corollary}\label{cor:A}
Let $\hat{A}_n$ be as in \eqref{eq:A}. Now,
\begin{enumerate}[label=(\roman*)]
\item
	Under $H_0$: $\hat{A}_n  \stackrel{d}{\rightarrow} N(0,1)$  as $n \rightarrow \infty$.
\item
	Under $H_1$: $\hat{A}_n  \rightarrow \infty$  as $n \rightarrow \infty$.
\end{enumerate}
\end{corollary}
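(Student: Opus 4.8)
The plan is to deduce Corollary~\ref{cor:A} from Theorem~\ref{th:A} by exactly the Slutsky-type argument that produced Corollary~\ref{cor:feas} from Theorem~\ref{th:fbm}, the only new wrinkle being that the limit in the $\sigma_u^2 = 0$ case of Theorem~\ref{th:A} is a \emph{mixed} Gaussian and that the convergence is stable rather than merely in law. The random normalisation in $\hat{A}_n$ is designed so that the conditional scale $S_p$ cancels, and making this cancellation rigorous is where stable convergence is essential.

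For (i), note first that under $H_0$ we have $\sigma_u^2 = 0$, so the increments of $Z$ coincide with those of $X$ and hence $\hat{S}_p$ and the empirical variograms entering $\hat{A}_n$ are the same whether computed from $Z$ or $X$. The case $\sigma_u^2 = 0$ of Theorem~\ref{th:A} gives
\begin{align*}
\sqrt{n}(\hat{\alpha}^* - \hat{\alpha}) \stackrel{st}{\rightarrow} Z_p^{**}\, S_p, \qquad Z_p^{**} \sim N\left(0,\sigma_{m,p}^{2,**}\right),
\end{align*}
a centred mixed-Gaussian limit with $\mathcal{F}$-conditional variance $S_p^2\,\sigma_{m,p}^{2,**}$. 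For the denominator, Proposition~\ref{prop:robust_cons} gives $\hat{\alpha}^* \stackrel{P}{\rightarrow} \alpha$, so continuity of $\sigma_{m,p}^{2,**}(\cdot)$ yields $\sigma_{m,p}^{2,**}(\hat{\alpha}^*) \stackrel{P}{\rightarrow} \sigma_{m,p}^{2,**}$, while Proposition~\ref{prop:S2fbm} gives $\hat{S}_p \stackrel{P}{\rightarrow} S_p$. Invoking the defining property of stable convergence --- that a stably convergent sequence converges jointly with any sequence having a probability limit --- together with the continuous mapping theorem, I would conclude
\begin{align*}
\hat{A}_n \stackrel{d}{\rightarrow} \frac{Z_p^{**}\, S_p}{S_p\,\sqrt{\sigma_{m,p}^{2,**}}} = \frac{Z_p^{**}}{\sqrt{\sigma_{m,p}^{2,**}}} \sim N(0,1),
\end{align*}
the factor $S_p$ cancelling pathwise in the mixing variable. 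This cancellation is the crux of (i) and is precisely why stable, rather than ordinary, convergence is required.

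For (ii), the case $\sigma_u^2 > 0$ of Theorem~\ref{th:A} gives $\sqrt{n}(\hat{\alpha}^* - \hat{\alpha}) \stackrel{P}{\rightarrow} \infty$, so it remains only to show that the denominator is bounded in probability and bounded away from zero. The robust estimator stays consistent under noise, so Proposition~\ref{prop:robust_cons} again yields $\hat{\alpha}^* \stackrel{P}{\rightarrow} \alpha$ and thus $\sigma_{m,p}^{2,**}(\hat{\alpha}^*) \stackrel{P}{\rightarrow} \sigma_{m,p}^{2,**}(\alpha) \in (0,\infty)$ by continuity. For $\hat{S}_p$, a law of large numbers for the lag-one empirical variograms of $Z$ shows that $\hat{\gamma}_q(1/n;Z)$ converges to a finite strictly positive limit for $q = p, 2p$: the signal contribution $\gamma_q(1/n;X)$ vanishes as $n \to \infty$ since $2\alpha + 1 > 0$, leaving only the positive moments of the Gaussian noise increments, so $\hat{S}_p$ converges in probability to a finite strictly positive constant. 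Hence the denominator has a positive finite probability limit while the numerator diverges, giving $\hat{A}_n \stackrel{P}{\rightarrow} \infty$.

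The main obstacle is the cancellation argument in (i): one must use that the convergence in Theorem~\ref{th:A} is stable, so that the mixed-Gaussian numerator and the probability limit $S_p$ of $\hat{S}_p$ can be placed on a common probability space and the ratio evaluated conditionally on the mixing $\sigma$-field, yielding an exactly pivotal $N(0,1)$ limit. The only subtlety in (ii) is checking that $\hat{S}_p$ neither degenerates to zero nor diverges under noise, which follows from the finiteness and strict positivity of the moments of the noise increments.
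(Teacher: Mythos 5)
Your proposal is correct and matches the paper's intended argument: the paper gives no explicit proof, calling Corollary \ref{cor:A} a ``straightforward application of Theorem \ref{th:A}'', and the intended route is precisely your Slutsky-type argument --- joint stable convergence of $\sqrt{n}(\hat{\alpha}^*-\hat{\alpha})$ with the probability limits $\hat{S}_p \to S_p$ and $\sigma_{m,p}^{2,**}(\hat{\alpha}^*)\to\sigma_{m,p}^{2,**}$, so that $S_p$ cancels and an exact $N(0,1)$ limit emerges under $H_0$, while under $H_1$ the numerator diverges against a positive, finite denominator. Your added verification that $\hat{S}_p$ remains bounded and bounded away from zero under noise is a detail the paper leaves implicit (it follows the same way as the convergence $\hat{\gamma}_p(k/n;Z)\stackrel{P}{\rightarrow}\gamma_p(0;Z)>0$ used in the proof of Proposition \ref{prop:incons}), and is consistent with it.
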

The applicability of Corollary \ref{cor:A} for testing whether a fractal process is contaminated by noise is obvious. 

\begin{remark}
Above we have assumed that the noise sequence $u$ is Gaussian. However, one can show that all the results of sections \ref{sec:assNoise} and \ref{sec:testNoise} apply for general iid noise sequences with finite variance when $p=2$. In other words, if the Gaussian assumption on the noise sequence $u$ is not fulfilled -- or seems too restrictive -- then one should choose $p=2$ and go ahead and apply the results of these sections.
\end{remark}

\section{Simulation studies}\label{sec:sims}
To examine the finite sample properties of the central limit results presented above, we here conduct three small simulation studies and collect the results in Tables \ref{tab:1}--\ref{tab:3}. In each study we will let $X$ be an fBm with Hurst index $H = \alpha + 1/2$, for various values of $\alpha$, and simulate $n$ observations on the interval $[0,1]$. Additional information on the exacts simulation setups are given in the captions of the tables.

For a value $\alpha_0 \in (-1/2,1/4)$, Corollary \ref{cor:feas} allows us to test the null hypothesis
\begin{align}\label{eq:H00}
H_0: \alpha = \alpha_0 \qquad \textnormal{against the alternative} \qquad H_1: \alpha \neq \alpha_0.
\end{align}
Panel A of Table \ref{tab:1} shows the empirical size -- i.e., the rejection rates of $H_0$ when $H_0$ is true -- of this test for various values of $n$ and $\alpha$, at a nominal $5\%$ level. In contrast, Panel B shows the empirical local power of the test, that is, the rejection rate of the null hypothesis
\begin{align}\label{eq:H0n}
H_{0,n}: \alpha = \alpha_0 + n^{-1/2} \qquad \textnormal{against the alternative} \qquad H_{1,n}: \alpha \neq  \alpha_0 + n^{-1/2},
\end{align}
when $\alpha_0$ is the true value of $\alpha$ used in the simulations of the fBm and $n$ is the number of observations. In other words, the value of $\alpha$ that we test for asymptotes towards the true value of $\alpha$ at the rate $n^{1/2}$. We see that both the empirical size and power properties of the test are quite good. In particular, the test is approximately correctly sized for $n\geq 20$. (We conjecture that some of the already small deviation from the nominal $5\%$ level is likely due to Monte Carlo simulation error.) Similarly, the empirical power is also adequate. We conclude that the test is very precise and can comfortably be used in practice, even for small sample sizes.

Table \ref{tab:2} contains the analogous results in the setup of Theorem \ref{th:robust_clt} \ref{it:i}. Note that we here set $\sigma_u^2 = 0$ so that we do not have any noise in the observations. We set $\kappa = 10$ for all sample sizes but this could likely be optimized by considering different values of $\kappa$ for each different value of $n$. In this way, the empirical properties of the test presented in Table \ref{tab:2} are conservative, in the sense that one could likely obtain even better properties by choosing $\kappa$ in a data-driven way, cf., e.g., the approach taken in the setup of Figure \ref{fig:robust}. When considering the local power in this case, we replace the rate $n^{1/2}$ by $n^{1/4}$, that is, in Panel B of Table \ref{tab:2} we are testing the null hypothesis
\begin{align}\label{eq:H0n2}
H_{0,n}: \alpha = \alpha_0 + n^{-1/4} \qquad \textnormal{against the alternative} \qquad H_{1,n}: \alpha \neq  \alpha_0 + n^{-1/4},
\end{align}
where $\alpha_0$ is the true value of $\alpha$ used in the simulations of the fBm. Again we conclude that the empirical properties of this test are good. We see that we need more observations here to get a properly sized test, than what was the case above, the reason being that the estimator now utilizes gaps of size $\kappa = 10$ between observations, when calculating the statistic $\hat{f}$. The empirical power properties also seem adequate, although the rate is now somewhat slower than what we saw above. All in all, though, the practical relevance of this hypothesis test is not as great as the one considered above: we do not posses the asymptotic theory in the case noisy case ($\sigma_u^2 > 0$) and in the non-nosiy case ($\sigma_u^2 = 0$) the setup of the regular OLS estimator is preferable for inference. The robust estimator is consistent, though, so for pointwise estimation, the estimator should still have great practical relevance.

Finally, Table \ref{tab:3} contains the results from the test \eqref{eq:noise_test}; that is, we test for the presence of noise in the observation using Corollary \ref{cor:A}. Now the true observations are $Z_i = \mu + X_i + u_i$, where the variance of $u_i$ is $\sigma_u^2 \geq 0$. In Panel A we again have the size of the test, which is where we set $\sigma_u^2 = 0$, i.e. where $H_0$ is true. In Panel $B$ we have the power properties, which is the rejection rates of $H_0$ when $\sigma_u^2 = 0.05$, i.e. when $H_1$ is true.\footnote{As can be seen in Table \ref{tab:3}, in this simulation study we consider fewer Monte Carlo replications than what we did in the two studies above; this is because we here, contrary to above, do not have a null value for $\alpha$ -- hence we need to use the estimated value, $\hat{\alpha}^*$, when calculating the asymptotic variance by Monte Carlo simulation (cf., Appendix \ref{app:simSig}), whereas we in the other two studies could use the null value when calculation the asymptotic variance of the estimator. The upshot is, that in the present study, we have a Monte Carlo simulation (of the asymptotic variance given the estimate $\hat{\alpha}^*$) inside a Monte Carlo simulation (of the empirical rejection rates of the hyptohesis test), making the computational burden rapidly increasing in the number of Monte Carlo replications. We therefore choose a lower value for this number in the third study. In practice, for a given time series, one only needs to run one Monte Carlo study (approximating the asymptotic variance given the estimate $\hat{\alpha}^*$) which is quite fast -- hence in practice, the test is still feasible to do at high accuracy, i.e. with many Monte Carlo replications in the estimation of the asymptotic variance.} We see that the test is slightly under-sized in most cases, although reasonably close to the nominal level. The power properties are reasonably good for $n\geq 800$ at least when $\alpha$ is not too negative; when $\alpha \approx -0.50$, the test apparently needs many observations to be able to reject the null when it is false. This is to be expected, however, considering Proposition \ref{prop:incons}. Overall, the empirical properties of the test are quite good and we remind the reader that they can be further improved by choosing $\kappa$ in a data-driven way, see sections \ref{sec:assNoise} and \ref{sec:emp_fin} for examples.

\begin{table}
\caption{\it Hypothesis testing using Corollary \ref{cor:feas}}
\begin{center}
\footnotesize
\begin{tabularx}{0.99\textwidth}{@{\extracolsep{\stretch{1}}}lcccccccc@{}} 
\multicolumn{9}{l}{Panel A: Size} \\
\toprule
$n$  & \multicolumn{2}{c}{$\alpha = -0.40$} & \multicolumn{2}{c}{$\alpha = -0.20$} & \multicolumn{2}{c}{$\alpha = 0$} & \multicolumn{2}{c}{$\alpha = 0.20$} \\ 
\cmidrule{2-9}
 & $p = 1$ & $p = 2$ & $p = 1$ & $p = 2$ & $p = 1$ & $p = 2$ & $p = 1$ & $p = 2$ \\ 
 $ 10 $ & $ 0.0611 $ & $ 0.0929 $ & $ 0.0453 $ & $ 0.0746 $ & $ 0.0561 $ & $ 0.0865 $ & $ 0.0920 $ & $ 0.1385 $\\
 $ 20 $ & $ 0.0515 $ & $ 0.0655 $ & $ 0.0469 $ & $ 0.0690 $ & $ 0.0575 $ & $ 0.0807 $ & $ 0.0681 $ & $ 0.0861 $\\
 $ 40 $ & $ 0.0506 $ & $ 0.0514 $ & $ 0.0485 $ & $ 0.0601 $ & $ 0.0553 $ & $ 0.0671 $ & $ 0.0580 $ & $ 0.0661 $\\
 $ 80 $ & $ 0.0525 $ & $ 0.0554 $ & $ 0.0491 $ & $ 0.0565 $ & $ 0.0507 $ & $ 0.0610 $ & $ 0.0500 $ & $ 0.0569 $\\
 $ 160 $ & $ 0.0527 $ & $ 0.0564 $ & $ 0.0540 $ & $ 0.0475 $ & $ 0.0520 $ & $ 0.0522 $ & $ 0.0492 $ & $ 0.0498 $\\
 $ 320 $ & $ 0.0505 $ & $ 0.0500 $ & $ 0.0486 $ & $ 0.0539 $ & $ 0.0546 $ & $ 0.0539 $ & $ 0.0462 $ & $ 0.0518 $\\
 $ 10000 $ & $ 0.0460 $ & $ 0.0537 $ & $ 0.0485 $ & $ 0.0464 $ & $ 0.0508 $ & $ 0.0528 $ & $ 0.0511 $ & $ 0.0495 $\\
\bottomrule 
\end{tabularx}
\begin{tabularx}{0.99\textwidth}{@{\extracolsep{\stretch{1}}}lcccccccc@{}} 
\multicolumn{9}{l}{Panel B: Local power} \\
\toprule
   & \multicolumn{2}{c}{$\alpha = -0.40$} & \multicolumn{2}{c}{$\alpha = -0.20$} & \multicolumn{2}{c}{$\alpha = 0$} & \multicolumn{2}{c}{$\alpha = 0.20$} \\ 
\cmidrule{2-9}
 & $p = 1$ & $p = 2$ & $p = 1$ & $p = 2$ & $p = 1$ & $p = 2$ & $p = 1$ & $p = 2$ \\ 
 $ 10 $ & $ 0.1505 $ & $ 0.2837 $ & $ 0.2346 $ & $ 0.3670 $ & $ 0.3947 $ & $ 0.5143 $ & $ 0.8135 $ & $ 0.9551 $\\
 $ 20 $ & $ 0.2311 $ & $ 0.3186 $ & $ 0.2515 $ & $ 0.3510 $ & $ 0.2740 $ & $ 0.3865 $ & $ 0.5465 $ & $ 0.7050 $\\
 $ 40 $ & $ 0.2692 $ & $ 0.3299 $ & $ 0.2591 $ & $ 0.3322 $ & $ 0.2608 $ & $ 0.3355 $ & $ 0.3336 $ & $ 0.4245 $\\
 $ 80 $ & $ 0.3171 $ & $ 0.3715 $ & $ 0.2691 $ & $ 0.3273 $ & $ 0.2703 $ & $ 0.3439 $ & $ 0.2463 $ & $ 0.3365 $\\
 $ 160 $ & $ 0.3452 $ & $ 0.3999 $ & $ 0.2744 $ & $ 0.3217 $ & $ 0.2750 $ & $ 0.3252 $ & $ 0.2285 $ & $ 0.2711 $\\
 $ 320 $ & $ 0.3693 $ & $ 0.4151 $ & $ 0.2793 $ & $ 0.3312 $ & $ 0.2681 $ & $ 0.3157 $ & $ 0.2048 $ & $ 0.2442 $\\
 $ 10000 $ & $ 0.4225 $ & $ 0.4701 $ & $ 0.2958 $ & $ 0.3322 $ & $ 0.2727 $ & $ 0.3132 $ & $ 0.2098 $ & $ 0.2292 $\\
\bottomrule 
\end{tabularx}
\end{center}
{\footnotesize \it Panel A: Empirical rejection rates of the test \eqref{eq:H00}. Panel B: Empirical rejection rates of the test \eqref{eq:H0n}. The nominal significance level is $0.05$; the numbers in the table are the average rejection rate of $10\ 000$ Monte Carlo replications; and we used $B = 10\ 000$ Monte Carlo replications to estimate the variance of the estimator under the null, as detailed in Appendix \ref{app:simSig}.}
\label{tab:1}
\end{table}

\begin{table}
\caption{\it Hypothesis testing using Theorem \ref{th:robust_clt}}
\begin{center}
\footnotesize
\begin{tabularx}{0.99\textwidth}{@{\extracolsep{\stretch{1}}}lcccccccc@{}} 
\multicolumn{9}{l}{Panel A: Size} \\
\toprule
$n$  & \multicolumn{2}{c}{$\alpha = -0.40$} & \multicolumn{2}{c}{$\alpha = -0.20$} & \multicolumn{2}{c}{$\alpha = 0$} & \multicolumn{2}{c}{$\alpha = 0.20$} \\ 
\cmidrule{2-9}
 & $p = 1$ & $p = 2$ & $p = 1$ & $p = 2$ & $p = 1$ & $p = 2$ & $p = 1$ & $p = 2$ \\ 
 $ 100 $ & $ 0.1036 $ & $ 0.1205 $ & $ 0.0920 $ & $ 0.1135 $ & $ 0.0736 $ & $ 0.1130 $ & $ 0.0973 $ & $ 0.1314 $\\
 $ 200 $ & $ 0.1064 $ & $ 0.1057 $ & $ 0.0633 $ & $ 0.0814 $ & $ 0.0561 $ & $ 0.0846 $ & $ 0.0524 $ & $ 0.0916 $\\
 $ 400 $ & $ 0.0786 $ & $ 0.0767 $ & $ 0.0553 $ & $ 0.0614 $ & $ 0.0572 $ & $ 0.0592 $ & $ 0.0419 $ & $ 0.0677 $\\
 $ 800 $ & $ 0.0639 $ & $ 0.0597 $ & $ 0.0530 $ & $ 0.0555 $ & $ 0.0523 $ & $ 0.0580 $ & $ 0.0395 $ & $ 0.0510 $\\
 $ 1600 $ & $ 0.0545 $ & $ 0.0544 $ & $ 0.0493 $ & $ 0.0507 $ & $ 0.0513 $ & $ 0.0529 $ & $ 0.0446 $ & $ 0.0506 $\\
 $ 3200 $ & $ 0.0520 $ & $ 0.0510 $ & $ 0.0506 $ & $ 0.0542 $ & $ 0.0506 $ & $ 0.0527 $ & $ 0.0438 $ & $ 0.0459 $\\
\bottomrule 
\end{tabularx}
\begin{tabularx}{0.99\textwidth}{@{\extracolsep{\stretch{1}}}lcccccccc@{}} 
\multicolumn{9}{l}{Panel B: Local power} \\
\toprule
   & \multicolumn{2}{c}{$\alpha = -0.40$} & \multicolumn{2}{c}{$\alpha = -0.20$} & \multicolumn{2}{c}{$\alpha = 0$} & \multicolumn{2}{c}{$\alpha = 0.20$} \\ 
\cmidrule{2-9}
 & $p = 1$ & $p = 2$ & $p = 1$ & $p = 2$ & $p = 1$ & $p = 2$ & $p = 1$ & $p = 2$ \\ 
 $ 100 $ & $ 0.2603 $ & $ 0.3378 $ & $ 0.2654 $ & $ 0.3605 $ & $ 0.4470 $ & $ 0.5050 $ & $ 0.8789 $ & $ 0.9561 $\\
 $ 200 $ & $ 0.3425 $ & $ 0.4011 $ & $ 0.3218 $ & $ 0.4188 $ & $ 0.3320 $ & $ 0.4725 $ & $ 0.8882 $ & $ 0.9182 $\\
 $ 400 $ & $ 0.4386 $ & $ 0.4750 $ & $ 0.4384 $ & $ 0.5111 $ & $ 0.3797 $ & $ 0.5417 $ & $ 0.7588 $ & $ 0.8041 $\\
 $ 800 $ & $ 0.5338 $ & $ 0.5894 $ & $ 0.5788 $ & $ 0.6576 $ & $ 0.5420 $ & $ 0.6543 $ & $ 0.5491 $ & $ 0.7214 $\\
 $ 1600 $ & $ 0.6758 $ & $ 0.7116 $ & $ 0.7238 $ & $ 0.7766 $ & $ 0.7274 $ & $ 0.8082 $ & $ 0.5075 $ & $ 0.7341 $\\
 $ 3200 $ & $ 0.8080 $ & $ 0.8348 $ & $ 0.8713 $ & $ 0.8972 $ & $ 0.8757 $ & $ 0.9240 $ & $ 0.5686 $ & $ 0.8078 $\\
\bottomrule 
\end{tabularx}
\end{center}
{\footnotesize \it Panel A: Empirical rejection rates of the test \eqref{eq:H00}. Panel B: Empirical rejection rates of the test \eqref{eq:H0n2}. The nominal significance level is $0.05$; the numbers in the table are the average rejection rate of $10\ 000$ Monte Carlo replications; and we used $B = 10\ 000$ Monte Carlo replications to estimate the variance of the estimator under the null, as detailed in Appendix \ref{app:simSig}.}
\label{tab:2}
\end{table}

\begin{table}
\caption{\it Hypothesis testing using Corollary \ref{cor:A}}
\begin{center}
\footnotesize
\begin{tabularx}{0.99\textwidth}{@{\extracolsep{\stretch{1}}}lcccccccc@{}} 
\multicolumn{9}{l}{Panel A: Size} \\
\toprule
$n$  & \multicolumn{2}{c}{$\alpha = -0.40$} & \multicolumn{2}{c}{$\alpha = -0.20$} & \multicolumn{2}{c}{$\alpha = 0$} & \multicolumn{2}{c}{$\alpha = 0.20$} \\ 
\cmidrule{2-9}
 & $p = 1$ & $p = 2$ & $p = 1$ & $p = 2$ & $p = 1$ & $p = 2$ & $p = 1$ & $p = 2$ \\ 
 $ 100 $ & $ 0.0580 $ & $ 0.0500 $ & $ 0.0300 $ & $ 0.0330$ & $ 0.0600 $ & $ 0.0390 $ & $ 0.0910 $ & $ 0.0580 $\\
 $ 200 $ & $ 0.0450 $ & $ 0.0380 $ & $ 0.0220 $ & $ 0.0230 $ & $ 0.0430 $ & $ 0.0260 $ & $ 0.1020 $ & $ 0.0790 $\\
 $ 400 $ & $ 0.0420 $ & $ 0.0480 $ & $ 0.0230 $ & $ 0.0210 $ & $ 0.0580 $ & $ 0.0610 $ & $ 0.0850 $ & $ 0.0690 $\\
 $ 800 $ & $ 0.0330 $ & $ 0.0330 $ & $ 0.0310 $ & $ 0.0340 $ & $ 0.0530 $ & $ 0.0490 $ & $ 0.0700 $ & $ 0.0490 $\\
 $ 1600 $ & $ 0.0390 $ & $ 0.0410 $ & $ 0.0380 $ & $ 0.0560 $ & $ 0.0550 $ & $ 0.0470 $ & $ 0.0600 $ & $ 0.0450 $\\
\bottomrule 
\end{tabularx}
\begin{tabularx}{0.99\textwidth}{@{\extracolsep{\stretch{1}}}lcccccccc@{}} 
\multicolumn{9}{l}{Panel B: Power} \\
\toprule
   & \multicolumn{2}{c}{$\alpha = -0.40$} & \multicolumn{2}{c}{$\alpha = -0.20$} & \multicolumn{2}{c}{$\alpha = 0$} & \multicolumn{2}{c}{$\alpha = 0.20$} \\ 
\cmidrule{2-9}
 & $p = 1$ & $p = 2$ & $p = 1$ & $p = 2$ & $p = 1$ & $p = 2$ & $p = 1$ & $p = 2$ \\ 
 $ 100 $ & $ 0.0750 $ & $0.0670$ & $ 0.0700 $ & $ 0.0840 $ & $ 0.2810 $ & $ 0.2780 $ & $ 0.5450 $ & $ 0.5650 $\\
 $ 200 $ & $ 0.0680 $ & $ 0.0620 $ & $ 0.1300 $ & $ 0.1390 $ & $ 0.5560 $ & $ 0.5760 $ & $ 0.7800 $ & $ 0.8050 $\\
 $ 400 $ & $ 0.0650 $ & $ 0.0700 $ & $ 0.2980 $ & $ 0.3300 $ & $ 0.8410 $ & $ 0.8660 $ & $ 0.8680 $ & $ 0.8780 $\\
 $ 800 $ & $ 0.0730 $ & $ 0.0680 $ & $ 0.6090 $ & $ 0.6250 $ & $ 0.9420 $ & $ 0.9700 $ & $ 0.8210 $ & $ 0.8380 $\\
 $ 1600 $ & $ 0.0830 $ & $ 0.1040 $ & $ 0.8830 $ & $ 0.9070 $ & $ 0.9730 $ & $ 0.9780 $ & $ 0.6370 $ & $ 0.7200 $\\
\bottomrule 
\end{tabularx}
\end{center}
{\footnotesize \it Panel A: Empirical rejection rates of the test \eqref{eq:noise_test} when $\sigma_u^2 = 0$, i.e., when $H_0$ is true. Panel B: Empirical rejection rates of the test \eqref{eq:noise_test} when $\sigma_u^2 = 0.05$, i.e., when $H_0$ is false and $H_1$ is true. The nominal significance level is $0.05$; the numbers in the table are the average rejection rate of $1\ 000$ Monte Carlo replications; and we used $B = 2\ 000$ Monte Carlo replications to estimate the variance of the estimator using $\hat{\alpha}^*$ as estimate of the fractal index used in the calculations, cf., Appendix \ref{app:simSig}.}
\label{tab:3}
\end{table}

\section{Empirical experiments}\label{sec:emp}

\subsection{Application to turbulent velocity data}\label{sec:emp_tur}
We first illustrate the use of the above methods in the context of the study of turbulent velocity flows. More precisely, we have at our disposal a time series of one-dimensional measurements of the longitudinal component of a turbulent velocity field in the atmospheric boundary layer, measured $35$ meters above ground level. The series consists of $20$ million equidistant observations over a time period of $4\ 000$ seconds, i.e. with $5\ 000$ measurements per second (in other words, a sampling frequency of $5$ kHz). The same time series has been studied in e.g. \cite{CHPP13}, \cite{BNPS13}, \cite{BHLP16}, and we refer to \cite{dhruva00} for more information on the data set itself.

We first de-mean the long time series and standardize it to have unit variance. We then sample the data at $5$ Hz, i.e. $5$ observations per second; this frequency is squarely in the so-called \emph{inertial range}, where the celebrated $5/3$-law of \cite{kolmogorov41b,kolmogorov41} states that -- in the context of this paper -- the process underlying the time series should have fractal index $\alpha = -1/6$ \citep[][Section 5]{CHPP13}.

The left plot of Figure \ref{fig:brook} contains the standardized data series sampled at $5$ Hz. The right plot gives examples of the regression \eqref{eq:reg_a} for $p\in \{1,2,3\}$. In this plot, the crosses are $\log \hat{\gamma}_p(k/n;X)$ as a function of $\log (k/n)$, while the lines are the associated OLS regression fits using a bandwidth of $m = 5$. The estimated slopes of the three lines are respectively $\hat{a}_{OLS} = 0.3361$, $0.6713$, and $1.0044$ which, through \eqref{eq:olsA}, yield the following estimates of the fractal index, $\hat{\alpha} = -0.1643$ $(0.0050)$, $-0.1638$ $(0.0047)$, and $-0.1630$ $(0.0049)$, where the numbers in parentheses denote the standard deviation of the estimates, cf. Theorem \ref{th:fbm}. Likewise, using Corollary \ref{cor:feas}, we test the the prediction of Kolmogorov via the null hypothesis
\begin{align*}
H_0: \alpha = -1/6 \quad \textnormal{against the alternative} \quad H_1: \alpha \neq -1/6,
\end{align*} 
at a nominal $5\%$ level. The corollary in this case yields P-values of $63.38\%$, $54.52\%$, and $45.54\%$, respectively. In other words, we can not reject the prediction that $\alpha = -1/6$ in this data set of turbulent velocity flows at this sampling frequency.

\begin{figure}[!t] 
\centering 
\includegraphics[width=\textwidth]{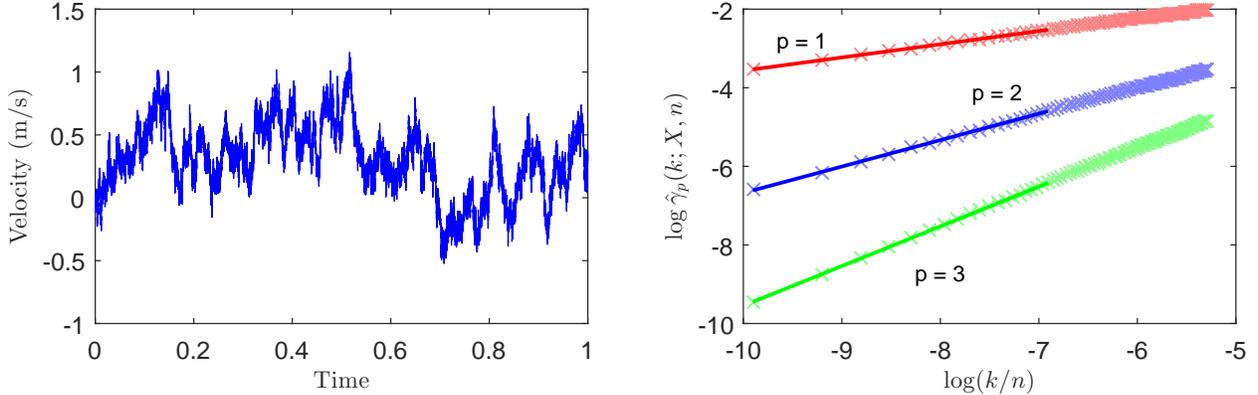}
\caption{Left: Time series plot of the standardized data sampled at $5$ Hz, as explained in the text. Right: Examples of the OLS regression \eqref{eq:reg_a} for $p = 1, 2, 3$ with bandwidth $m = 5$.}
\label{fig:brook}
\end{figure}

\subsection{Application to financial price data} \label{sec:emp_fin}

An oft-used model of high frequency financial logarithmic prices is
\begin{align}\label{eq:futprices}
Z_{j/n} = X_{j/n} + u_j, \quad j = 1, \ldots, n,
\end{align}
where $u = \{u_j\}_{j=1}^n$ is a \emph{market microstructure} noise process \citep{ohara95} and $X$ is a stochastic volatility process, e.g.,
\begin{align*}
X_t = X_0 + \int_0^t\sigma_s dG_s \quad t \geq 0.
\end{align*}

It is well known that for the absence of arbitrage in the market, it is a requirement that $X$ is a semimartingale \citep{DS94}, which under mild assumptions, is equivalent to $G$, and therefore $X$, having $\alpha = 0$. In other words, since we expect the market to be free of arbitrage, we would expect that $X$ has fractal index $\alpha = 0$. 

To test this, we study a long time series of the logarithm of financial futures prices, supposing they come from the model \eqref{eq:futprices}. To be specific, we have at our disposal data recorded every second on the front month E-mini S\&P $500$ futures contract, traded on the CME Globex electronic trading platform, from January 3, 2005 until December 31, 2014. We exclude weekends and holidays and keep only full trading days, which results in $2\ 495$ days. For these days, we further restrict attention to the most active period of the day, which is when the New York Stock Exchange (NYSE) is open, from 9.30 a.m.\ until 4 p.m.\ Eastern Standard Time (EST). This results in $23\ 400$ seconds ($6.5$ hours) for each day. We estimate $\alpha$ each day using both estimators \eqref{eq:olsA} and \eqref{eq:a_star}; this results in $N = 2\ 495$ estimates of $\alpha$ each calculated from $n = 23\ 400$ observations. 

As discussed, since we believe the market to be free of arbitrage opportunities, we would expect to find $\alpha \approx 0$ for financial price series; however, when we estimate $\alpha$ from the data using the standard OLS estimator \eqref{eq:olsA} we most often find very negative values; in fact the mean estimate of $\alpha$ over the $N = 2\ 495$ days is $-0.20$. Similarly, when we use Theorem \ref{th:fbm} to test the null hypothesis 
\begin{align}\label{eq:h0}
H_0: \alpha = 0 \quad \textnormal{against the alternative} \quad H_1: \alpha \neq 0,
\end{align} 
at a nominal $5\%$ level, we reject $H_0$ on $98.92\%$ of the days. In other words, at first glance, it seems that high frequency log prices are very rough. Of course, this finding might simply an artifact of the noise sequence $u$ which we expect to be present in high frequency stock prices; we therefore also apply the robust estimator \eqref{eq:a_star}. 

The robust estimator requires a choice of tuning parameter $\kappa$ and, as suggested in Section \ref{sec:assNoise}, we run a simulation experiment to gauge a reasonable value for this parameter. We set up our experiment to be realistic so that we can expect the optimal value of $\kappa$ we find to also be a good value to use on the real data. We therefore simulate $B = 5\ 000$ instances (``days") of an fBm with $n = 23\ 400$ observations (``seconds"); for these fBms we set $\alpha =0$, as we expect this to be the true value from the underlying process. (when $\alpha = 0$ the fBm is a standard Brownian motion.) For each simulated fBm, we add an iid sequence of Gaussian noise with $\sigma_u^2 = 0.01$; this value of $\sigma_u^2$ was chosen since it is a reasonable value for the variance of microstructure noise \citep{PeterAsgerRV06}.\footnote{The results are robust to the choice of $\sigma_u^2$ and the $\kappa$ that minimized the RMSE lies between $25$ and $200$ for all realistic values of $\sigma_u^2$.} We then apply \eqref{eq:a_star} and calculate the RMSE for various values of $\kappa \geq 2$. The results are shown in the left plot of Figure \ref{fig:ES} where we see that the RMSE is minimized for $\kappa \approx 60$.

\begin{figure}[!t] 
\centering 
\includegraphics[width=\textwidth]{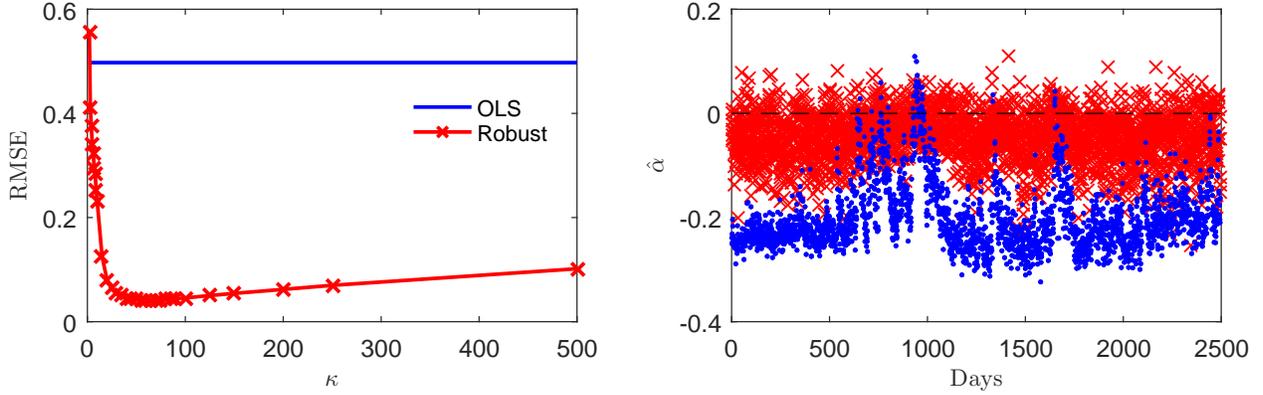}
\caption{Left: Simulation study of the root mean squared error (RMSE) of the  standard OLS estimator \eqref{eq:olsA}, blue line, and the robust estimator  \eqref{eq:a_star}, red line and crosses, as a function of $\kappa$; the details of the study are given in the text. Right: Estimates of $\alpha$ from the standard OLS estimator \eqref{eq:olsA}, blue dots, and from the robust estimator \eqref{eq:a_star} with $\kappa = 60$, red crosses. The bandwidth is $m = 5$ in both plots.}
\label{fig:ES}
\end{figure}

We then go ahead and apply both estimators to the empirical data described above; the results are seen in the right plot of Figure \ref{fig:ES}. As just described, the non-robust OLS estimator (blue dots) are often very negative (the average across all days is $-0.20$). Conversely, the robust estimator with $\kappa = 60$ (red crosses) seem much closer to the expected value of zero (the average across all days is $-0.0448$). We also apply Corollary \ref{cor:A} to the data, to test the null hypothesis, \eqref{eq:noise_test}, of no noise in the observations. When performing this test each day at a $5\%$ level, we reject the null on $99.72\%$ of the days. In other words, the formal test provides very strong evidence for the presence of market microstructure noise in the observations. 



\section{Conclusion and open problems}\label{sec:concl}
In this paper, we have laid out a large and coherent framework for analyzing data with fractal characteristics. We focused on a particular estimator of the fractal index but the methods can be applied more generally and in particular to other related estimators as well. For instance, the extension to using higher order differences -- instead of first order differences, as we used here -- when calculating the variogram, is straight forward. We showed how to extend the theory to a large class of non-Gaussian processes. The consistency result of the estimator turns out to be the same for Gaussian and non-Gaussian processes, while the central limit theorem requires an extra factor in the non-Gaussian case. The estimator of this correcting factor -- $\widehat{S_p}$ in Corollary \ref{cor:feas} -- is asymptotically irrelevant when no stochastic volatility is present. For this reason we recommend always including this factor when applying the central limit theorem of the estimator of the fractal index, whether or not one believes the underlying data to be Gaussian. Lastly, we saw that noise in the observations, e.g. measurement noise, will bias the estimates of the fractal index downwards. It is important that the practioner is cognizant of this possible bias when studying data which are potentially rough; we suggested an estimator which is robust to such noise and can be applied if one expects the data to be so contaminated.


Let us briefly comment on a few possible directions for further study. The OLS estimator relies on a bandwidth parameter $m$; in \cite{DH99} the authors find, through simulations, that the optimal value for this is $m=2$. However, as we saw above, this was not the case in general when we studied the finite sample properties of the estimator. Indeed, for rough processes, an intermediate value for the bandwidth, such as $m=5$, is preferable. How to choose $m$ optimally in finite samples is an open and interesting problem. Likewise, what value of the power parameter $p>0$ to use is an open problem; \cite{GSP12} recommend $p=1$ as they find that this makes the estimation more robust to, e.g., outliers or non-Gaussianity. However, above we proposed an estimator which is robust to such anomalies by construction, in particular when $p = 2$. An in-depth investigation of the properties of the robust estimator as compared to the standard estimator with $p=1$ would valuable. Lastly, we also saw that the robust estimator relies on the tuning parameter $\kappa$; finding a data-driven way to choose this parameter would be useful. In general, the problem of ``robust estimation" is interesting and more theoretical as well as applied research into this field would be valuable.


\appendix

\section{Proofs}\label{app:proofs}

\begin{proof}[Proof of Proposition \ref{lem:holder}]
Note first, that since $X$ is Gaussian, assumption (\ref{ass:1}) implies that for $n \geq 1$,
\begin{align*}
\E[ |X_t - X_s|^{2n}] = C_{2n} |t-s|^{(2\alpha+1)n} L(t-s)^{2n}, \quad t,s \in \R,
\end{align*}
where $x \mapsto L(x)$ is slowly varying at zero. Let now $K \subset (0,\infty)$ be a compact set and consider $t, s \in K$. By the properties of slowly varying functions \citep[][Theorem 1.5.6(ii)]{BGT}, for all $\epsilon > 0$ we can find $a>0$ such that
\begin{align*}
E[|X_t - X_s|^{2n}] \leq \tilde{C}_{1,n} |t-s|^{1 + (2\alpha +1)n-1-2n\epsilon}, \quad t-s \in (0,a],
\end{align*}
for a constant $\tilde{C}_{1,n}>0$. Conversely, since $L$ is continuous on $(0,\infty)$, we also have that
\begin{align*}
E[|X_t - X_s|^{2n}] \leq \tilde{C}_{2,n} |t-s|^{1 + (2\alpha +1)n-1}, \quad t-s>a,
\end{align*}
for a constant $\tilde{C}_{2,n}>0$. Putting these two observations together, we have that there exists a constant $\tilde{C}_{3,n}>0$ such that for all $\epsilon > 0$ we have
\begin{align*}
E[|X_t - X_s|^{2n}] \leq \tilde{C}_{3,n} |t-s|^{1 + (2\alpha +1)n-1 - 2n\epsilon}, \quad t,s \in K.
\end{align*}
Using this, we deduce that for $n$ sufficiently large, the continuity criterion of Kolmogorov shows that $X$ has a modification which is locally H\"older continuous of order $\phi$ for all $\phi \in \left(0, \frac{(2\alpha + 1)n-1-2n\epsilon}{2n} \right) = \left(0, \alpha + 1/2 - \frac{1}{2n}-\epsilon \right)$. Letting $n \uparrow \infty$, $\epsilon \downarrow 0$, yields the desired result.
\end{proof}

\begin{proof}[Proof of Proposition \ref{prop:cons}]
Note first, that we can write
\begin{align}\label{eq:aDiff}
\hat{\alpha} - \alpha = \frac{1}{p x_m^T x_m} x_m^T (U^m + \epsilon^m),
\end{align}
where
\begin{align*}
U^m &:= \left(U_{1/n}, U_{2/n}, \ldots, U_{m/n}\right)^T \\
&= \left(\log \left( \frac{\hat{\gamma}_p(1/n;X)}{\gamma_p(1/n;X)} \right), \log \left( \frac{\hat{\gamma}_p(2/n;X)}{\gamma_p(2/n;X)} \right), \ldots, \log \left( \frac{\hat{\gamma}_p(m/n;X)}{\gamma_p(m/n;X)} \right)\right)^T,
\end{align*}
and
\begin{align*}
\epsilon^m &:= \left(\epsilon_{1/n}, \epsilon_{2/n}, \ldots, \epsilon_{m/n}\right)^T = \left(\log L_p(1/n), \log L_p(2/n), \ldots, \log L_p(m/n)\right)^T.
\end{align*}

To see that the term $x_m^T \epsilon^m$ vanishes as $n \rightarrow \infty$, note that
\begin{align}\label{eq:x_vanish}
\sum_{k=1}^m x_{m,k} = \sum_{k=1}^m \left( \log k - \overline{\log m} \right) = 0
\end{align}
and therefore
\begin{align*}
x_m^T \epsilon^m = \sum_{k=1}^m x_{m,k}\log L_p(k/n) = \sum_{k=1}^m x_{m,k}\log \left( \frac{L_p(k/n)}{L_p(1/n)}  \right) \rightarrow 0, \quad n \rightarrow 0,
\end{align*}
since $\lim_{n\rightarrow \infty}  \frac{L_p(k/n)}{L_p(1/n)} = 1$ by the property of slowly varying functions.

The required result now follows by noting that 
\begin{align*}
\frac{\hat{\gamma}_p(k/n;X)}{\gamma_p(k/n;X)} = \frac{\hat{\gamma}_p(k/n;X)}{m_p\gamma_2(k/n;X)^{p/2}} \stackrel{P}{\rightarrow} 1, \qquad n \rightarrow \infty,  \quad k\geq 1,
\end{align*}
which holds by Propositon 1 in \cite{BNCP09} under (\ref{ass:lln1}); by Theorem 2 in \cite{BNCP09} under (\ref{ass:lln2}); and by Theorem 3.1. in \cite{CHPP13} under (\ref{ass:lln3}).
\end{proof}

\begin{proof}[Proof of Theorem \ref{th:fbm}]
Consider first the case where $X$ satisfies assumption (\ref{ass:clt1}). Using Theorem 2 in \cite{BNCP11} and the limit in equation \eqref{eq:rnlim}, this paper, we get
\begin{align}\label{eq:BM0}
\sqrt{n} \left( \begin{matrix} \frac{\hat{\gamma}_p(1/n;X)}{ \gamma_p(1/n;X)} - 1 \\
					 \vdots \\
					 \frac{\hat{\gamma}_p(m/n;X)}{\gamma_p(m/n;X)} -1	
		\end{matrix} \right) \stackrel{d}{\rightarrow} N(0,\Lambda_p), \quad n \rightarrow \infty,
\end{align}
where $\Lambda_p = \{ \lambda_p^{k,v} \}_{k,v = 1}^m$ is a $m \times m$ matrix with entries
\begin{align}\label{eq:lamLim}
\lambda_p^{k,v} =   \lim_{n\rightarrow \infty}  n \cdot  Cov \left(\frac{\hat{\gamma}_p(k/n;B^H)}{ \gamma_p(k/n;B^H)},\frac{\hat{\gamma}_p(v/n;B^H)}{ \gamma_p(v/n;B^H)}  \right), \quad k,v = 1, 2, \ldots, m,
\end{align}
with $\gamma_p(\cdot;B^H)$ denoting the $p$'th order variogram for a fractional Brownian motion with Hurst index $H = \alpha + 1/2$, and similarly for $\hat{\gamma}_p$. Note that the limit in \eqref{eq:lamLim} exists for $k,v = 1, 2, \ldots m$, by \cite{BM83}, Theorem 1, see also \cite{CHPP13}, Remark 3.3. 
 
As we will show below, assumption (\ref{ass:99}) implies that
\begin{align}\label{eq:epsConv}
\sqrt{n} x_m^T \epsilon^m \rightarrow 0, \quad n \rightarrow \infty,
\end{align}
which means that from \eqref{eq:aDiff} we get, using \eqref{eq:BM0} and the delta method,
\begin{align*}
\sqrt{n}\left(\hat{\alpha} - \alpha \right) \stackrel{d}{\rightarrow} N\left(0,  \frac{x_m^T \Lambda_p x_m}{(x_m^T x_m)^2 p^2}   \right), \quad n \rightarrow \infty,
\end{align*}
which is what we wanted to show. To see that \eqref{eq:epsConv} holds, use the rule of l'H\^opital, and the properties of slowly varying functions, to conclude
\begin{align*}
\lim_{n\rightarrow \infty} \sqrt{n} \log \left( \frac{L_p(k/n)}{L_p(1/n)} \right) = \lim_{n\rightarrow \infty} 2n^{-1/2}\left( \frac{L_p'(k/n)}{L_p(k/n)}k - \frac{L_p'(1/n)}{L_p(1/n)}\right) = 0,
\end{align*}
by assumption (\ref{ass:99}). This concludes the proof when $X$ is Gaussian.

Suppose instead that assumption (\ref{ass:clt3}) holds (the case (\ref{ass:clt2}) is similar). We proceed analogously to above: by Theorem 4 of \cite{BNCP11}, see also Theorem 3.2. and Remark 3.4. of \cite{CHPP13}, we get
\begin{align*}
\sqrt{n} \left( \begin{matrix} \frac{\hat{\gamma}_p(1/n;X)}{ \gamma_p(1/n;X)} - 1 \\
					 \vdots \\
					 \frac{\hat{\gamma}_p(m/n;X)}{\gamma_p(m/n;X)} -1	
		\end{matrix} \right) \stackrel{st}{\rightarrow} \int_0^1 \sigma_s^p \Lambda_p dBs,
\end{align*}
where $B$ is an $m$-dimensional Brownian motion, defined on an extension of the original probability space, $(\Omega, \mathcal{F}, \mathbb{P})$, independent of $\mathcal{F}$. The matrix $\Lambda_p$ is identical to the one above.

We proceed as before. In particular, invoking the delta method we get
\begin{align*}
\sqrt{n}\left(\hat{\alpha} - \alpha \right) \stackrel{st}{\rightarrow} \frac{x_m^T \Lambda_p}{x_m^Tx_m p} \frac{\int_0^1 \sigma_s^p dB_s}{\int_0^t \sigma_s^pds},
\end{align*}
or, in other words (conditionally on $(\sigma_t)_{t\in \R}$),
\begin{align*}
\sqrt{n}\left(\hat{\alpha} - \alpha \right) \stackrel{st}{\rightarrow} Z_p \cdot S_p, \quad S_p := \frac{\sqrt{\int_0^1 \sigma_s^{2p} ds}}{\int_0^1 \sigma_s^pds},
\end{align*}
and where $Z_p$ is as in Theorem \ref{th:fbm}. This concludes the proof.
\end{proof}

\begin{proof}[Proof of Proposition \ref{prop:S2fbm}]
(i) Note that we can write
\begin{align*}
\widehat{S_p} = \frac{\sqrt{m_{2p}^{-1} \hat{\gamma}_{p}(1/n;X)/\gamma_2(1/n;X)^{p}}}{m_p^{-1} \hat{\gamma}_p(1/n;X)/\gamma_2(1/n;X)^{p/2}}.
\end{align*}
The result now follows from Proposition 1 in \cite{BNCP09}.

(ii) This part follows from Theorem 3.1. in \cite{CHPP13}.
\end{proof}

\begin{proof}[Proof of Proposition \ref{prop:incons}]
This follows easily from the fact that when $\sigma_u^2>0$ it holds for any $k\geq 1$
\begin{align*}
\hat{\gamma}_p(k/n;Z) \stackrel{P}{\rightarrow} \gamma_p(0;Z) = C_p \sigma_u^p > 0, \quad n \rightarrow \infty,
\end{align*}
so that
\begin{align*}
\hat{a}_{OLS} =   \frac{1}{x_m^T x_m} x_m^T \hat{\gamma}_p^m  \stackrel{P}{\rightarrow}  0, \quad n \rightarrow \infty,
\end{align*}
by \eqref{eq:x_vanish}.
\end{proof}

\subsection{Proofs related to the noise robust estimator}
Most of the proofs related to the noise robust estimator $\hat{\alpha}^*$ proceeds analogously to the proofs related to the standard estimator $\hat{\alpha}$ given above. Indeed, the function $f_p(h;Z,\kappa)$ has the same behavior as $\gamma_2(h;X)$ for $h \approx 0$ --- cf. equations \eqref{eq:gam_rel} and \eqref{eq:f_rel}. The upshot is that most of the theory will go through as in the proofs above. The following Lemma formally provides the details.

\begin{lemma}\label{lem:L_star}
Suppose the process $X$ is such that its variogram $\gamma$ satisfies assumptions (\ref{ass:1})--(\ref{ass:99}). Fix $\kappa > 1$ and define the function $f$ as in \eqref{eq:f_rel} with
\begin{align*}
L^*(x;\kappa) := \kappa^{2\alpha+1}L(x\kappa) - L(x), \quad x>0.
\end{align*}
Then $f$ satisfies assumptions (\ref{ass:1})--(\ref{ass:99}) with $L^*$ as the slowly varying function.
\end{lemma}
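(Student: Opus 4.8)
The plan is to reduce everything to one structural observation: up to the immaterial constant $C_p^{2/p}$ (and since $L_p^{2/p}=L$, the power $2/p$ exactly neutralises the $p/2$ built into $L_p$, so it suffices to work with $\gamma=\gamma_2$), the function $f$ is simply a rescaled difference of the variogram,
\begin{align*}
f(x) = \gamma(\kappa x) - \gamma(x) = (\kappa x)^{2\alpha+1}L(\kappa x) - x^{2\alpha+1}L(x) = x^{2\alpha+1}L^*(x;\kappa).
\end{align*}
I would then verify (\ref{ass:1})--(\ref{ass:99}) for $f$ one at a time, transferring each property of $\gamma$ across this operation. The single fact driving all four checks is the asymptotic relation
\begin{align*}
\frac{L^*(x;\kappa)}{L(x)} = \kappa^{2\alpha+1}\frac{L(\kappa x)}{L(x)} - 1 \longrightarrow \kappa^{2\alpha+1} - 1 =: c_\kappa, \quad x \to 0,
\end{align*}
where I use that $L$ is slowly varying so $L(\kappa x)/L(x)\to 1$, and where $c_\kappa>0$ precisely because $\kappa>1$ and $2\alpha+1>0$ (the latter using $\alpha>-1/2$).

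For (\ref{ass:1}), continuous differentiability of $L^*$ is inherited from that of $L$. The displayed relation gives $L^*(x;\kappa)\sim c_\kappa L(x)$ as $x\to 0$, so $L^*$ is bounded away from zero near the origin (since $L$ is and $c_\kappa>0$), and slow variation follows by writing, for fixed $t>0$,
\begin{align*}
\frac{L^*(tx;\kappa)}{L^*(x;\kappa)} = \frac{L(tx)}{L(x)}\cdot\frac{L^*(tx;\kappa)/L(tx)}{L^*(x;\kappa)/L(x)} \longrightarrow 1\cdot\frac{c_\kappa}{c_\kappa} = 1.
\end{align*}
For (\ref{ass:2}), differentiating twice and using $\gamma''(x)=x^{2\alpha-1}L_2(x)$ yields
\begin{align*}
f''(x) = \kappa^2\gamma''(\kappa x) - \gamma''(x) = x^{2\alpha-1}\left(\kappa^{2\alpha+1}L_2(\kappa x) - L_2(x)\right) =: x^{2\alpha-1}L_2^*(x;\kappa),
\end{align*}
and the same argument as for $L^*$ shows $L_2^*(x;\kappa)\sim c_\kappa L_2(x)$, whence $L_2^*$ is slowly varying and continuous on $(0,\infty)$.

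For (\ref{ass:3}), the equivalences $L_2^*\sim c_\kappa L_2$ and $L^*\sim c_\kappa L$ give a bound $|L_2^*(y;\kappa)/L^*(x;\kappa)|\leq C\,|L_2(y)/L(x)|$ valid for all small $x$ and all $y\in[x,x^b]$, so taking suprema and $\limsup$ reduces (\ref{ass:3}) for $f$ to (\ref{ass:3}) for $\gamma$ with the same exponent $b$; the uniformity of the equivalences over the windows $[x,x^b]$ is supplied by Karamata's uniform convergence theorem for slowly varying functions. For (\ref{ass:99}), I differentiate directly, $(L^*)'(x;\kappa)=\kappa^{2\alpha+2}L'(\kappa x)-L'(x)$, and apply the bound $|L'(y)|\leq C(1+y^{-\delta})$ termwise together with the triangle inequality to obtain $|(L^*)'(x;\kappa)|\leq C'(1+x^{-\delta})$ for a new constant $C'$, so (\ref{ass:99}) holds with the same $\delta$.

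The only genuinely delicate point is the first one: a difference of two slowly varying functions need not be slowly varying, so the whole argument hinges on the leading terms not cancelling, i.e.\ on $c_\kappa=\kappa^{2\alpha+1}-1\neq 0$, which is exactly where the hypotheses $\kappa>1$ and $\alpha>-1/2$ enter. Everything else is bookkeeping, apart from the uniformity over $[x,x^b]$ needed in (\ref{ass:3}), for which Karamata's theorem is invoked. In the boundary case $\alpha=0$, where (\ref{ass:2}) is replaced by the modified assumption, the identical computation applies with $\gamma''(x)=f(x)L_2(x)$ in place of $x^{2\alpha-1}L_2(x)$, the polynomial bound $|f(x)|\leq Cx^{-\beta}$ being clearly preserved under the rescaling $x\mapsto\kappa x$.
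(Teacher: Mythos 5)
Your proof is correct and follows essentially the same route as the paper: write $f$ as the difference $\gamma_2(\kappa\,\cdot)-\gamma_2(\cdot)$, observe that $L^*$ and $L_{2}^*$ are the corresponding $\kappa^{2\alpha+1}$-scaled differences of $L$ and $L_2$, and transfer each of (\ref{ass:1})--(\ref{ass:99}) using slow variation together with the non-vanishing of $\kappa^{2\alpha+1}-1$ (the paper's proof does exactly this, treating (\ref{ass:1}) and (\ref{ass:99}) as trivial and handling (\ref{ass:3}) by a triangle-inequality split equivalent to your factorization through $L^*\sim(\kappa^{2\alpha+1}-1)L$). The only cosmetic difference is that your appeal to Karamata's uniform convergence theorem is unnecessary, since the window $[x,x^b]$ shrinks to $0$ as $x\to 0$, making the required uniformity automatic.
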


\begin{proof}[Proof of Lemma \ref{lem:L_star}]
That (\ref{ass:1}) and (\ref{ass:99}) are satisfied is trivial. To see that (\ref{ass:2}) holds, note that
\begin{align*}
f(x;X,\kappa) = x^{2\alpha+1}L^*(x;\kappa) = \gamma_2(x\kappa) - \gamma(x), \quad x>0,
\end{align*}
so that
\begin{align*}
\frac{\partial^2}{\partial x^2}f(x;X,\kappa) = x^{2\alpha-1} L_{(2)}^*(x;\kappa),
\end{align*}
where
\begin{align*}
L_{(2)}^*(x;\kappa) := \kappa^{2\alpha+1} L_2(x\kappa) - L_2(x)
\end{align*}
is slowly varying at zero and continuous on $(0,\infty)$, since these properties hold for $L_2$ by assumption. 

With this function $L_{(2)}^*$, it is also the case that (\ref{ass:3}) holds. To see this, write
\begin{align*}
\left| \frac{L_{(2)}^*(y;\kappa)}{L^*(x;\kappa)}\right| &\leq \left| \frac{ \kappa^{2\alpha+1}\frac{L_{2}(y\kappa)}{L(\kappa x)}}{\kappa^{2\alpha+1} - L(x)/L(\kappa x)}\right| +   \left| \frac{ \frac{L_{2}(y)}{L(x)}}{\kappa^{2\alpha+1}L(\kappa x)/L(x) - 1}\right| \\
	&=  \left( \left| \frac{ \kappa^{2\alpha+1}\frac{L_{2}(y\kappa)}{L_2(y)}\frac{L(x)}{L(\kappa x)}}{\kappa^{2\alpha+1} - L(x)/L(\kappa x)}\right| +   \left| \frac{1}{\kappa^{2\alpha+1}L(\kappa x)/L(x) - 1}\right| \right)\frac{L_{2}(y)}{L(x)}.
\end{align*}
Using that both $L$ and $L_2$ are slowly varying at zero and obey (\ref{ass:3}) by assumption, the result follows.
\end{proof}

\begin{proof}[Proof of Proposition \ref{prop:robust_cons}]
Given Lemma \ref{lem:L_star}, the proof is similar to the one of Proposition \ref{prop:cons}; we skip the details.
\end{proof}

\begin{proof}[Proof of Theorem \ref{th:robust_clt}]
\ref{it:i} Given Lemma \ref{lem:L_star}, the proof is similar to the one of Theorem \ref{th:fbm}; we skip the details. The asymptotic variance can be calculated using the delta method -- we give the expression in Appendix \ref{app:avar}. 

\ref{it:ii} For ease of notation, we prove this for $p=2$. The case with general $p>0$ follows by (conditional) Gaussianity. Write
\begin{align*}
\hat{f}_2(k/n;Z,\kappa) = \hat{f}_2(k/n;X,\kappa) + \hat{f}_2(k/n;u,\kappa) + 2\hat{f}_{1,1}(k/n;X,u,\kappa),
\end{align*}
where
\begin{align*}
\hat{f}_{1,1}(k/n;X,u,\kappa) := \hat{\gamma}_{1,1}(\kappa k/n;X,u)-\hat{\gamma}_{1,1}(k/n;X,u),
\end{align*}
with
\begin{align*}
\hat{\gamma}_{1,1}(k/n;X,u) := \frac{1}{n- k}\sum_{i=1}^{n- k} \big( X_{\frac{i+ k}{n}} - X_{\frac{i}{n}}  \big) \big( u_{i+ k} - u_{i} \big).
\end{align*}
We are interested in the asymptotic behavior of 
\begin{align*}
\frac{\hat{f}_2(k/n;Z,\kappa)}{f_2(k/n;X;\kappa)} = \frac{\hat{f}_2(k/n;X,\kappa)}{f_2(k/n;X;\kappa)} + \frac{\hat{f}_2(k/n;u,\kappa)}{f_2(k/n;X;\kappa)} +\frac{\hat{f}_{1,1}(k/n;X,u,\kappa)}{f_2(k/n;X;\kappa)}.
\end{align*}
From \ref{it:i}, we know that the first of these terms obeys a CLT with rate $\sqrt{n}$. Further, it is standard to show that $\hat{f}_2(k/n;u,\kappa)$ does the same --- hence, the second term will diverge, since the denominator $f_2(k/n;X;\kappa) \rightarrow 0$ as $n\rightarrow 0$. The result follows.
\end{proof}

\begin{proof}[Proof of Theorem \ref{th:A}]
\ref{it:i} Note first, that we can write
\begin{align}\label{eq:AmA}
\hat{\alpha}^* - \hat{\alpha} = \frac{1}{2x_m^T x_m}x_m^T \log \hat{f}_p^m - \frac{1}{p x_m^T x_m}x_m^T \log \hat{\gamma}_p^m = \frac{1}{2x_m^T x_m}x_m^T \log \left( \frac{\hat{f}_p^m}{(\hat{\gamma}_p)^{2/p}}  \right),
\end{align}
where $\hat{\gamma}_p^m$ and $\hat{f}_p^m$ denotes $m\times 1$ vectors as explained in the main text. We see that \eqref{eq:AmA} takes the form as the OLS estimators studies above. Given Lemma \ref{lem:L_star}, the proof of the present theorem is similar to the one of Theorem \ref{th:fbm}; we skip the details. The asymptotic variance can be calculated using the delta method -- we give the expression in Appendix \ref{app:avar}. 

\ref{it:ii} This is obvious due to the fact that
\begin{align*}
 \hat{\alpha}^* - \hat{\alpha} \stackrel{P}{\rightarrow} \alpha + 1/2 > 0, \quad \textnormal{as } n\rightarrow \infty,
\end{align*}
due to Propositions  \ref{prop:incons} and \ref{prop:robust_cons}.
\end{proof}

\section{Monte Carlo approximation of the variance of the OLS estimator}\label{app:simSig}
For a number of observations $n\in \N$, we seek to calculate the finite sample versions of the entries of the matrix $\Lambda_p = \{ \lambda_p^{k,v} \}_{k,v=1}^m$ given by \eqref{eq:lamLimTh}, i.e.
\begin{align*}
\lambda_{p,n}^{k,v} &:=    n \cdot Cov \left(\frac{\hat{\gamma}_p(k/n;B^H)}{ \gamma_p(k/n;B^H)},\frac{\hat{\gamma}_p(v/n;B^H)}{ \gamma_p(v/n;B^H)}  \right) \\
	&= \frac{n}{ \gamma_p(k/n;B^H) \gamma_p(v/n;B^H)} \cdot  Cov \left(\hat{\gamma}_p(k/n;B^H),\hat{\gamma}_p(v/n;B^H) \right) , \quad k,v = 1, 2, \ldots, m.
\end{align*}
First note that for $p>0$,
\begin{align*}
\gamma_p(k/n;B^H) = C_p (k/n)^{pH}, \quad C_p = \frac{2^{p/2}}{\sqrt{\pi}} \Gamma\left(\frac{p+1}{2}\right),
\end{align*}
so what remains is to calculate the covariance term. We suggest to approximate this term by Monte Carlo simulation as follows. First, pick a large number $B \in \N$ of Monte Carlo replications. Then, for each $b = 1, \ldots, B$:
\begin{enumerate}
\item
	Simulate $n$ observations of an fBm on $[0,1]$ with Hurst index $H=\alpha + 1/2$.
\item
	Calculate the values of the empirical variograms $\hat{\gamma}^{(b)}_p(k/n;B^H)$ for $k = 1, \ldots, m$ using equation \eqref{eq:gam_hat}.
\end{enumerate}
With these $B$ instances of the empirical variograms, estimate the relevant covariances. The asymptotic variance can be approximated by choosing $n$ very large.

\section{Expressions for the asymptotic variance in theorems \ref{th:robust_clt} and \ref{th:A} }\label{app:avar}
Let $k^*$ be the smallest integer such that $k^* \cdot \kappa > m$. As can be seen in the proofs above, we are interested in the joint asymptotic distribution of functions of the random quantities $\frac{\hat{\gamma}_p(k/n;X)}{\gamma_p(k/n;X)}$ for $k\in \mathcal{M}$, where
\begin{align*}
\mathcal{M} := \{1, 2, \ldots, m, k^*\kappa, (k^*+1)\kappa, \ldots, m\kappa\}.
\end{align*}
Let $|A|$ denote the cardinality of the set $A$ and define the $|\mathcal{M}| \times |\mathcal{M}|$ matrix $\Lambda_p^* = \{ \lambda_p^{k,v,*} \}_{k,v\in \mathcal{M}}$ as
\begin{align}\label{eq:LAM_star}
\lambda_p^{k,v,*} =   \lim_{n\rightarrow \infty} n \cdot Cov \left(\frac{\hat{\gamma}_p(k/n;B^H)}{ \gamma_p(k/n;B^H)},\frac{\hat{\gamma}_p(v/n;B^H)}{ \gamma_p(v/n;B^H)}  \right), \quad k,v \in \mathcal{M},
\end{align}
where $B^H$ is an fBm with Hurst index $H = \alpha + 1/2$. The values of the entries in this matrix can be approximated by Monte Carlo simulation in the same way as described in Appendix \ref{app:simSig}.

\subsection{Asymptotic variance of Theorem \ref{th:robust_clt}}
By the delta method, we have that
\begin{align*}
\sigma^{2,*}_{m,p}= \frac{x_m^T \Sigma_1^T \Lambda^*_p \Sigma_1 x_m}{4 (x_m^T x_m)^2},
\end{align*}
where $\Lambda_p^*$ is defined in equation \eqref{eq:LAM_star} and $\Sigma_1$ is the $m \times  |\mathcal{M}|$ matrix with entries
\begin{align*}
\Sigma_1(i,j) = \frac{2}{p\left(\kappa^{2\alpha+1}-1\right)} \cdot
	\begin{cases}
    		-1       & \quad \text{for } (i,j) = (1,1), (2,2), \ldots (m,m), \\
    		\kappa^{2\alpha+1}  & \quad \text{for } (i,j) = (1,\kappa), (2,2\kappa), \ldots (k^*-1,(k^*-1)\kappa), \\
		\kappa^{2\alpha+1}  & \quad \text{for } (i,j) = (k^*,m+1), (k^*+1,m+2), \ldots (m,|\mathcal{M}|), \\
		0  & \quad \text{else.}
  	\end{cases}
\end{align*}


\subsection{Asymptotic variance of Theorem \ref{th:A}}
By the delta method, we have that
\begin{align*}
\sigma^{2,**}_{m,p}= \frac{x_m^T \Sigma_2^T \Lambda^*_p \Sigma_2 x_m}{4 (x_m^T x_m)^2},
\end{align*}
where $\Lambda_p^*$ is defined in equation \eqref{eq:LAM_star} and $\Sigma_2$ is the $m \times  |\mathcal{M}|$ matrix with entries
\begin{align*}
\Sigma_2(i,j) = \frac{2\kappa^{2\alpha+1}}{p\left(\kappa^{2\alpha+1}-1\right)} \cdot
	\begin{cases}
    		-1       & \quad \text{for } (i,j) = (1,1), (2,2), \ldots (m,m), \\
    		1  & \quad \text{for } (i,j) = (1,\kappa), (2,2\kappa), \ldots (k^*-1,(k^*-1)\kappa), \\
		1  & \quad \text{for } (i,j) = (k^*,m+1), (k^*+1,m+2), \ldots (m,|\mathcal{M}|), \\
		0  & \quad \text{else.}
  	\end{cases}
\end{align*}

{\small 
\bibliographystyle{chicago}
\bibliography{mybib-v6}
}

\end{document}